\documentclass[10pt,a4paper]{amsart}
%\documentclass[12pt,a4paper]{amsart}
%\usepackage{hyperref}
%\usepackage{rcs}

%\RCS $Revision: 1.6 $
%\RCS $Date: 2007/03/04 21:44:43 $

\linespread{1.24}

\title[Semigroups generated by elliptic operators in non-divergence form ]
{Semigroups generated by elliptic operators in non-divergence form on
$C_0(\Omega)$}

%    Information for first author
\author{Wolfgang Arendt }
\address{Institute of Applied Analysis,  University of Ulm, D - 89069 Ulm, 
Germany}
\email{wolfgang.arendt@uni-ulm.de}
%    Information for second author

\author{Reiner Sch\"atzle} 
\address{Institute of Mathematics, Eberhard-Karls-University of T\"ubingen, 
%Auf der Morgenstelle 10,
%Geb\"aude C, 5 A 40,
D-72076 T\"ubingen, Germany}
\email{rscha@everest.mathematik.uni-tuebingen.de} 

\subjclass{35K20, 35J25, 47D06}
%53 A 05, 53 A 30, 53 C 21, 49 Q 15

\keywords{holomorphic semigroups, elliptic operators in non-divergence form,
Dirichlet problem, Wiener regular, Lipschitz domain, exterior cone property}

\date{September 30, 2010}

% General info

%\dedicatory{}

\newtheorem{theorem}{Theorem}[section]
\newtheorem{lemma}[theorem]{Lemma}
\newtheorem{proposition}[theorem]{Proposition}
\newtheorem{corollary}[theorem]{Corollary}

\theoremstyle{definition}

\theoremstyle{remark}

\numberwithin{equation}{section}

\newcommand{\R}{{\mathbb R}}

\newcommand{\N}{{\mathbb N}}

\newcommand{\C}{{\mathbb C}}

\newcommand{\cA}{{\mathcal A}}
\newcommand{\cB}{{\mathcal B}}
\newcommand{\cD}{{\mathcal D}}

\newcommand{\cL}{{\mathcal L}}

\DeclareMathOperator{\diam}{diam}
\DeclareMathOperator{\dist}{dist}
\DeclareMathOperator{\loc}{loc}
\DeclareMathOperator{\supp}{supp}

\def\Re{{\rm Re \,}}

\newcommand{\dy}{{\,dy}}

%%%%%%%%%%%%%%

%%%%%%%%%%%%%%
\begin{document}

\maketitle

\begin{abstract}\label{abstract}
Let $\Omega \subset \R^n$ be a bounded open set satisfying the uniform exterior
cone condition. Let $\cA$ be a uniformly elliptic operator given by
$$
\cA u= 
\sum\limits^n_{i,j=1}a_{ij}\partial_{ij} u + \sum\limits^n_{j=1}b_j \partial_j u
+ cu
$$
where 
$$
a_{ij}\in C(\bar{\Omega}) \ \mbox{ and } \ b_j, c \in L^\infty (\Omega), c \le 
0 \ .
$$
We show that the realization $A_0$ of $\cA$ in
$$
C_0(\Omega):=\{u\in C(\bar{\Omega}):u_{|_{\partial\Omega}}=0\}
$$
given by
\begin{eqnarray*}
D(A_0)&:=&\{u\in C_0(\Omega)\cap W^{2,n}_{\loc}(\Omega):\cA u \in
C_0(\Omega)\}\\
A_0u&:=& \cA u
\end{eqnarray*}
generates a bounded holomorphic $C_0$-semigroup on $C_0(\Omega)$. The result is
in particular true if $\Omega$ is a Lipschitz domain. So far the best known
result
seems to be the case where $\Omega$ has $C^2$-boundary \cite[Section
3.1.5]{Lu}. We also study the elliptic problem
$$
\begin{array}{l}
-\cA u \ = \ f\\
u_{|_{\partial\Omega}} \ =\ g \ .
\end{array}
$$
\end{abstract}

\setcounter{section}{-1}
%\section{Introduction}
\section{Introduction} \label{intro}
The aim of this paper is to study elliptic and parabolic problems for operators
in non-divergence form with continuous second order coefficients and to prove
the existence (and uniqueness) of solutions which are continuous up to the
boundary of the domain. Throughout this paper $\Omega$  is a bounded open set
in $\R^n,n\ge 2$, with boundary $\partial \Omega$. We consider the operator
$\cA$ given by
$$
\cA
u:=\sum\limits^n_{i,j=1}a_{ij}\partial_{ij}u+\sum\limits^n_{j=1}b_j\partial_j
u+cu
$$
with real-valued coefficients $a_{ij},b_j,c$ satisfying
$$
\begin{array}{l}
b_j\in L^\infty(\Omega) \ , \ j=1,\ldots,n \ , \ c\in L^\infty(\Omega) \ , \ c
\le 0
\,\\
a_{ij}\in C(\bar{\Omega}) \ , \  a_{ij}=a_{ji}   \ ,\\
\sum\limits^n_{i,j=1}a_{ij}(x)\xi_i\xi_j \ge \Lambda |\xi|^2 \qquad (x \in
\bar{\Omega},\xi \in \R^n)
\end{array}
$$
where $\Lambda > 0$ is a fixed constant.\\
Our best results are obtained under the hypothesis that $\Omega$ satisfies the
uniform exterior cone condition (and thus in particular if $\Omega$ has
Lipschitz boundary). Then we show that for each $f\in L^n(\Omega),g\in
C(\partial \Omega)$ there exists a unique $u\in C(\bar{\Omega}) \cap
W^{2,n}_{\loc}(\Omega)$  such that
$$
 (E)
\left\{  \begin{array}{lll}
-\cA  u&=& f\\
u_{|_{\partial\Omega}}&=& g \ .
\end{array} \right.
$$
(Corollary \ref{cor2.3}). This result is proved with the help of Alexandrov's
maximum principle (which is responsible for the choice of $p=n$)  and other
standard results for elliptic second order differential operators (put together
in the appendix). Our main concern is the parabolic problem
$$
 (P)
\left\{  \begin{array}{rll} 
u_t &=&\cA u\\
u(0,\cdot)&=&u_0\\
u(t,x)&=&0 \qquad x \in \partial \Omega \ , \ t> 0 \ .
         \end{array}\right.
$$
with Dirichlet boundary conditions. Let $C_0(\Omega):=\{v\in
C(\bar{\Omega}):v_{|_{\partial \Omega}}=0\}$.
Under the uniform exterior cone condition, we show that the realization $A_0$
of $\cA$ in $C_0(\Omega)$ given by
\begin{eqnarray*}
D(A_0)&:=&\{v\in C_0(\Omega)\cap W^{2,n}_{\loc}(\Omega):\cA v \in
C_0(\Omega)\}\\
A_0 v&:=&\cA v
\end{eqnarray*}
generates a bounded, holomorphic $C_0$-semigroup on $C_0(\Omega)$. This 
improves the known results, which are presented in the monographie of Lunardi
\cite[Corollary 3.1.21]{Lu} for $\Omega$ of class $C^2$ (and $b_j,c$ uniformly 
continuous).

If the second order coefficients are Lipschitz continuous, then the results 
mentioned  so far hold if $\Omega$ is merely Wiener-regular. For elliptic
operators   in divergence form, this is proved in \cite[Theorem 8.31]{gil.tru}
for the elliptic problem
$(E)$ and in \cite[Corollary 4.7]{AB99} for the parabolic problem $(P)$.
Concerning the
elliptic problem $(E)$, and in particular the Dirichlet problem; i.e., the case
$f=0$ in $(E)$, there is earlier work by Krylov \cite[Theorem 4]{Kry67}, who
shows well-posedness of the Dirichlet problem if $\Omega$ is merely Wiener
regular and the second order coefficients are Dini-continuous. Krylov also
obtains the well-posedness of the Dirichlet problem for $a_{ij}\in C
(\bar{\Omega})$ if $\Omega$ satisfies the uniform exterior cone condition
\cite[Theorem 5]{Kry67}. He uses different (partially probabilistic) methods,
though.

\section{The Poisson problem}\label{secPoisson}
We consider the bounded open set $\Omega \subset \R^n$ and the elliptic 
operator $\cA$ from the Introduction. At first we consider the case where the
second order conditions are Lipschitz continuous. Then we merely need a very
mild regularity condition on $\Omega$. We say that $\Omega$ is \textit{Wiener
regular} (or \textit{Dirichlet regular}) if for each $g\in C(\partial \Omega)$
there exists a solution $u\in C^2(\Omega)\cap C(\bar{\Omega})$ of the Dirichlet
problem
\begin{eqnarray*}
\Delta u&=& 0\\
u_{|_{\partial\Omega}}&=&g \ .
\end{eqnarray*}
If $\Omega$ satisfies the exterior cone condition, then $\Omega$ is Dirichlet
regular.

\begin{theorem}\label{thm1.1}
Assume that the second order coefficients $a_{ij}$ are globally Lipschitz
continuous. If $\Omega$ is Wiener-regular, then for each $f \in L^n(\Omega)$,
there exists a unique $u\in W^{2,n}_{\loc}(\Omega)\cap C_0(\Omega)$ such that
$$
-\cA u = f \ .
$$
\end{theorem}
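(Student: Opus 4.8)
The plan is to establish existence and uniqueness separately, with uniqueness being the easier part. For uniqueness, suppose $u \in W^{2,n}_{\loc}(\Omega) \cap C_0(\Omega)$ satisfies $\cA u = 0$. Since $c \le 0$, Alexandrov's maximum principle (in the form recorded in the appendix, applicable because $u$ vanishes on $\partial\Omega$, is locally $W^{2,n}$, and $\cA u \in L^n_{\loc}$) forces $\sup_\Omega u \le 0$; applying the same to $-u$ gives $u \equiv 0$. The Lipschitz hypothesis on the $a_{ij}$ is not needed for uniqueness — only boundedness and ellipticity — but it will be essential for existence.

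For existence I would first reduce to the Dirichlet problem. Fix $f \in L^n(\Omega)$ and, using the Lipschitz continuity of the second-order coefficients, rewrite $\cA$ in divergence form modulo lower-order terms: $\sum a_{ij}\partial_{ij} u = \sum \partial_i(a_{ij}\partial_j u) - \sum (\partial_i a_{ij})\partial_j u$, so that $\cA u = \div(a \nabla u) + \tilde b \cdot \nabla u + c u$ with $\tilde b \in L^\infty(\Omega)$. Now invoke the divergence-form theory: by \cite[Theorem 8.31]{gil.tru} (cited in the introduction), since $\Omega$ is Wiener-regular and the coefficients are bounded and measurable with $c \le 0$, the problem $-\div(a\nabla v) - \tilde b\cdot\nabla v - cv = f$, $v|_{\partial\Omega} = 0$ has a unique solution $v$ that is continuous up to the boundary, i.e. $v \in C_0(\Omega)$, and $v \in W^{1,2}_{\loc}$; interior elliptic regularity for the $L^p$ theory (the Calderón–Zygmund estimates in the appendix) then upgrades $v$ to $W^{2,n}_{\loc}(\Omega)$ since the second-order coefficients are continuous and $f \in L^n_{\loc}$. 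Setting $u = v$ gives the desired solution.

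The main obstacle is the boundary behavior: producing a solution that is genuinely continuous up to $\partial\Omega$ and vanishes there, using only Wiener regularity rather than a quantitative boundary condition like the exterior cone property. The interior existence and regularity are standard $L^p$-theory; the delicate point is that Wiener regularity was defined only via the Laplacian's Dirichlet problem, and one must transfer regularity of a boundary point for $\Delta$ to regularity for the variable-coefficient operator $\cA$. This is where I expect the real work to lie — presumably via a barrier argument: at each boundary point $\xi$, a barrier for $\Delta$ (which exists by Wiener regularity of $\xi$) can be modified, using the uniform ellipticity and the Lipschitz bound on the $a_{ij}$ to control the perturbation, into a local barrier for $\cA$, and then the standard Perron-type or approximation scheme yields continuity of $u$ at $\xi$. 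An alternative, perhaps cleaner, route is to approximate $\Omega$ from inside by smooth domains $\Omega_k \uparrow \Omega$, solve the problem on each $\Omega_k$ (where classical theory applies), obtain uniform sup-norm bounds from Alexandrov's principle, extract a locally uniformly convergent subsequence via interior Schauder/$L^p$ estimates, and finally use Wiener regularity of $\Omega$ together with a barrier at each boundary point to show the limit attains the zero boundary values continuously.
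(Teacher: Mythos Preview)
Your overall strategy---Aleksandrov for uniqueness, rewrite $\cA$ in divergence form via the Lipschitz hypothesis, invoke the divergence-form boundary theory, then upgrade interior regularity---is exactly the paper's approach. But your ``main obstacle'' paragraph misidentifies where the difficulty lies. The whole \emph{point} of the Lipschitz hypothesis is that once $\cA$ is written in divergence form, \cite[Theorem~8.31]{gil.tru} (or \cite[Corollary~4.6]{AB99}) already delivers a weak solution in $C_0(\Omega)\cap W^{1,2}_{\loc}(\Omega)$ on any Wiener-regular $\Omega$; no barriers, no domain approximation, no transfer of regularity from $\Delta$ to $\cA$ is needed at this stage. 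You invoke this result correctly in your second paragraph and then seem to forget it in your third.

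Where you do gloss over something real is the interior regularity upgrade from $W^{1,2}_{\loc}$ to $W^{2,n}_{\loc}$. The Calder\'on--Zygmund estimate in the appendix is an a~priori estimate for functions already in $W^{2,p}$, not a bootstrap from $W^{1,2}$. The paper inserts Friedrichs' theorem \cite[Theorem~8.8]{gil.tru} (which uses the Lipschitz continuity of $a_{ij}$ again) to first obtain $u\in W^{2,2}_{\loc}$; only then can one identify $\cA_d u=\cA u$ pointwise and apply Calder\'on--Zygmund. The paper also treats $f\in L^q$ with $q>n$ first and then handles $f\in L^n$ by approximation: choose $f_k\in L^\infty$ with $f_k\to f$ in $L^n$, solve $-\cA u_k=f_k$, use Aleksandrov to get uniform convergence $u_k\to u$ in $C_0(\Omega)$, and use the interior Calder\'on--Zygmund bound to get weak convergence in $W^{2,n}(B_\varrho)$ on each ball, hence $-\cA u=f$. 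Your one-line ``upgrades $v$ to $W^{2,n}_{\loc}$'' hides these steps.
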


The point is that for Lipschitz continuous $a_{ij}$ the  operator $\cA$ may be
written in divergence form. This is due to the following lemma.

\begin{lemma}\label{lem1.2}
Let $h:\Omega \to \R$ be Lipschitz continuous. Then $h\in
W^{1,\infty}(\Omega)$. In particular, $hu \in W^{1,2}(\Omega)$ for all $u \in
W^{1,2}(\Omega)$ and $\partial_j (hu)=(\partial_j h)u+h\partial_j u$.
\end{lemma}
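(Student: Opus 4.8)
The plan is to establish the two assertions separately: first that a Lipschitz function on $\Omega$ has bounded distributional first derivatives, and then to deduce the product rule by a mollification argument. Write $L$ for the Lipschitz constant of $h$.

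For the first part, fix $j\in\{1,\dots,n\}$ and consider the difference quotients $D_j^t h(x):=t^{-1}\bigl(h(x+te_j)-h(x)\bigr)$, defined for $x$ with $\dist(x,\partial\Omega)>|t|$; since $h$ is $L$-Lipschitz we have $|D_j^t h|\le L$ wherever it is defined. Given $\varphi\in C_c^\infty(\Omega)$, for $|t|$ small enough that $\supp\varphi+[-|t|,|t|]e_j\subset\Omega$ the discrete integration-by-parts identity $\int_\Omega (D_j^t h)\,\varphi\dx=-\int_\Omega h\,(D_j^{-t}\varphi)\dx$ holds by a change of variables. Letting $t\to 0$, the right-hand side converges to $\int_\Omega h\,\partial_j\varphi\dx$ because $D_j^{-t}\varphi\to-\partial_j\varphi$ uniformly with supports in a fixed compact set, while the left-hand side is bounded in modulus by $L\,\|\varphi\|_{L^1(\Omega)}$. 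Hence $\varphi\mapsto-\int_\Omega h\,\partial_j\varphi\dx$ is a bounded linear functional on $C_c^\infty(\Omega)$ of norm at most $L$ for the $L^1$-norm; by density of $C_c^\infty(\Omega)$ in $L^1(\Omega)$ and the identification $L^1(\Omega)^\ast=L^\infty(\Omega)$, it is represented by a unique $g_j\in L^\infty(\Omega)$ with $\|g_j\|_\infty\le L$, and $g_j$ is by construction the distributional derivative $\partial_j h$. Together with $h\in C(\bar\Omega)\subset L^\infty(\Omega)$ this gives $h\in W^{1,\infty}(\Omega)$.

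For the product rule, fix $u\in W^{1,2}(\Omega)$ and $\varphi\in C_c^\infty(\Omega)$, and set $K:=\supp\varphi$. On a neighbourhood of $K$ mollify $h$ to $h_\varepsilon:=h\ast\rho_\varepsilon$ for small $\varepsilon$; then $h_\varepsilon\in C^\infty$, $h_\varepsilon\to h$ uniformly on $K$, $\|\nabla h_\varepsilon\|_\infty\le L$, and $\partial_j h_\varepsilon=(\partial_j h)\ast\rho_\varepsilon\to\partial_j h$ in $L^2$ on a neighbourhood of $K$. Since $h_\varepsilon$ is smooth with bounded derivatives, the classical product rule $\partial_j(h_\varepsilon u)=(\partial_j h_\varepsilon)u+h_\varepsilon\,\partial_j u$ holds in $W^{1,2}_{\loc}(\Omega)$ (this in turn follows by mollifying $u$), so $\int_\Omega h_\varepsilon u\,\partial_j\varphi\dx=-\int_\Omega\bigl[(\partial_j h_\varepsilon)u+h_\varepsilon\,\partial_j u\bigr]\varphi\dx$. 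Passing to the limit $\varepsilon\to0$ — using uniform convergence $h_\varepsilon\to h$ on $K$ together with $u\,\partial_j\varphi,\,u\varphi\in L^1(\Omega)$, and $\partial_j h_\varepsilon\to\partial_j h$ in $L^2(K)$ together with $u\varphi\in L^2(\Omega)$ — yields $\int_\Omega hu\,\partial_j\varphi\dx=-\int_\Omega\bigl[(\partial_j h)u+h\,\partial_j u\bigr]\varphi\dx$. As $(\partial_j h)u+h\,\partial_j u\in L^2(\Omega)$, this is precisely the claim that $hu\in W^{1,2}(\Omega)$ with $\partial_j(hu)=(\partial_j h)u+h\,\partial_j u$.

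I expect the only genuinely delicate point to be the first part, where $\Omega$ need not be convex: one must avoid invoking Rademacher's theorem globally (which would require geodesic rather than Euclidean Lipschitz control), but the difference-quotient argument sidesteps this, since it uses only local translates and the single global bound $L$, which passes directly to the distributional derivative. The product-rule step is then routine, the only care being the order of the limits ($\varepsilon\to0$ after $\varphi$ has been fixed).
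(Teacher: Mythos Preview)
Your argument is correct; the only blemish is a harmless sign slip in the middle of the first paragraph (in fact $D_j^{-t}\varphi\to\partial_j\varphi$, so the right-hand side tends to $-\int_\Omega h\,\partial_j\varphi\,dx$, which is the functional you then correctly analyse).

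The paper, however, takes a different and much shorter route for the first assertion: rather than work on $\Omega$ with difference quotients and $L^1$--$L^\infty$ duality, it extends $h$ to a Lipschitz function on all of $\R^n$ without increasing the Lipschitz constant (citing Minty's extension theorem) and then invokes the standard textbook fact (Evans, \S5.8, Theorem~4) that a Lipschitz function on $\R^n$ lies in $W^{1,\infty}(\R^n)$; restriction to $\Omega$ gives the claim. The product rule is not argued separately. Your approach is more self-contained and makes the mechanism visible, and --- as you rightly observe --- it handles non-convex $\Omega$ directly because the difference quotient only uses short Euclidean translates and the global bound $L$. The paper's approach buys brevity: the extension step cleanly removes any geometric subtlety about $\Omega$ in one stroke, at the cost of citing two external results.
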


\begin{proof}
One can extend $h$ to a Lipschitz function on $\R^n$ (without increasing the
Lipschitz constant, see \cite{Min}). Now  the result follows from
\cite[5.8 Theorem 4]{Eva98}.
\end{proof}

\noindent
\textbf{Proof of Theorem \ref{thm1.1}.}
We assume that $\Omega$ is Dirichlet regular. Uniqueness follows from
Aleksandrov's maximum principle Theorem \ref{thmA1}.
In order to solve the problem we replace $\cA$ by an operator in divergence
form in the following way.
Let $\tilde{b}_j:=b_j-\sum\limits^n_{i=1}\partial_ia_{ij},j=1,\ldots,n$. Then
$\tilde{b}_j \in L^\infty(\Omega)$. Consider the elliptic operator $\cA_d$ in 
divergence form given by 
$$
\cA_d u = \sum\limits^n_{i,j=1}\partial_i(a_{ij}\partial_j
u)+\sum\limits^n_{j=1} \tilde{b}_j \partial_j u+cu \ .
$$
a) Let $f\in L^q(\Omega)$  for $q>n$. By \cite[Theorem 8.31]{gil.tru} or
\cite[Corollary 4.6]{AB99} there exists a unique $u \in C_0(\Omega)\cap
W^{1,2}_{\loc}(\Omega)$ such that $-\cA_d   u = f$ weakly, i.e.,
$$
\sum\limits^d_{i,j=1}\int\limits_\Omega a_{ij}\partial_j u \partial_i v -
\sum\limits^d_{j=1}\int\limits_\Omega \tilde{b}_j\partial_j u v -
\int\limits_\Omega cuv = \int\limits_\Omega f v
$$
for all $v\in \cD(\Omega)$ (the space of all test functions). We mention
in passing that $u \in W^{1,2}_0(\Omega)$ by \cite[Lemma 4.2]{AB99}.
For our purposes, it is important that $u \in W^{2,2}_{\loc}(\Omega)$ by
Friedrich's theorem \cite[Theorem 8.8]{gil.tru}. Here we use again that the
$a_{ij}$ are uniformly Lipschitz continuous but do not need any further
hypothesis on $b_j$ and $c$. It follows from Lemma \ref{lem1.2} that
$a_{ij}\partial_ju \in W^{1,2}_{\loc}(\Omega)$ and
$\partial_i(a_{ij}\partial_ju)=(\partial_i
a_{ij})\partial_ju+a_{ij}\partial_{ij} u$.
Thus $\cA_d u = \cA u$. Now  it follows from the interior Calderon-Zygmund
estimate Theorem \ref{thmA2} that $u \in W^{2,q}_{\loc}(\Omega)\subset
W^{2,n}_{\loc}(\Omega)$. This settles the result if $f \in L^q(\Omega)$ for
some $q>n$.\\
b) Let $f\in L^n(\Omega)$. Choose $f_k\in L^\infty(\Omega)$ sucht that
$\lim\limits_{k\to \infty}f_k=f$ in $L^n(\Omega)$. Let $u_k\in
W^{2,n}_{\loc}\cap C_0(\Omega)$ such that $-\cA u_k=f_k$ (use case a)). By
Aleksandrov's maximum principle Thereom \ref{thmA1}, we have
$$
\|u_k-u_\ell\|_{L^{\infty}(\Omega)}\le c\|f_k-f_\ell\|_{{L^{n}(\Omega)}} \ .
$$
Thus $u_k$  converge uniformly to a function $u \in C_0(\Omega)$ as $k\to
\infty$. By the Calderon-Zygmund estimate (Theorem \ref{thmA2}),
$$
\|u_k\|_{W^{2,n}(B_{\varrho})} \le
c(\|u_k\|_{L^{n}(B_{2\varrho})}+\|f_k\|_{L^{n}(B_{2\varrho})})
$$
if $\overline{B_{2\varrho}}\subset \Omega$, where the constant $c$ does
not depend
on $k$. Thus the sequence $(u_k)_{k\in\N}$ is bounded in $W^{2,n}(B_\varrho)$.
It follows from reflexivity that $u\in W^{2,n}(B_\varrho)$ and
$u_k\rightharpoonup u$ in $W^{2,n}(B_\varrho)$
as $k\to \infty$ after extraction of a subsequence. Consequently, $u \in
W^{2,n}_{\loc}(\Omega)\cap C_0(\Omega)$. Since $-\cA u_k=f_k$ for all $k \in
\N$, it follows that $-\cA u=f$.
\qed\\

Now we  return to the general assumption  $a_{ij}\in C(\bar{\Omega})$ and do no
longer assume that the $a_{ij}$ are Lipschitz continuous. We need the
following lemma which we prove for convenience.

\begin{lemma}\label{lem1.3}
a) There exist $\tilde{a}_{ij}\in C^b(\R^n)$ such that
$\tilde{a}_{ij}=\tilde{a}_{ji},\tilde{a}_{ij}(x)=a_{ij}(x)$ if $x\in
\Omega$ and
$$
\sum\limits^n_{i,j=1}a_{ij}(x)\xi_i\xi_j \ge \frac{\Lambda}{2}|\xi|^2
$$
for all $\xi \in \R^n,x\in \Omega$.\\
b) There exist $a^k_{ij}\in C^\infty(\bar{\Omega})$ such that
$a^k_{ij}=a^k_{ji},\sum\limits^n_{i,j=1}a^k_{ij}(x)\xi_i\xi_j
\ge \frac{\Lambda}{2}|\xi|^2$ and
$\lim\limits_{k\to\infty}a^k_{ij}(x)=a_{ij}(x)$  uniformly on $\bar{\Omega}$.
\end{lemma}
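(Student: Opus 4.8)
The plan is to prove Lemma \ref{lem1.3} in two parts, treating the extension statement (a) and the smooth-approximation statement (b) separately.

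For part (a), I would first extend each $a_{ij}$ from $\bar{\Omega}$ to a bounded continuous function on $\R^n$; since $a_{ij}\in C(\bar{\Omega})$ and $\bar{\Omega}$ is compact, a Tietze-type extension gives $\tilde{a}_{ij}\in C^b(\R^n)$ with $\tilde{a}_{ij}(x)=a_{ij}(x)$ on $\bar\Omega$ and $\|\tilde a_{ij}\|_\infty=\|a_{ij}\|_{C(\bar\Omega)}$. To enforce symmetry, I would replace $\tilde{a}_{ij}$ by $\tfrac12(\tilde a_{ij}+\tilde a_{ji})$, which preserves both the boundary values (since $a_{ij}=a_{ji}$ there) and continuity. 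The ellipticity bound $\sum \tilde a_{ij}(x)\xi_i\xi_j\ge \tfrac{\Lambda}{2}|\xi|^2$ is automatic for $x\in\bar\Omega$ (we have $\ge\Lambda|\xi|^2$ there by hypothesis). Strictly the statement in (a) as written only asks for the bound at points of $\Omega$, so no further work is needed; if one wanted it on all of $\R^n$ one would simply note that near $\bar\Omega$ the quadratic form stays close to the original by uniform continuity of the extension and then cut off, but as stated this is not required.

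For part (b), the idea is to mollify the extended coefficients. Take $\tilde{a}_{ij}\in C^b(\R^n)$ from part (a), let $(\rho_k)$ be a standard mollifier sequence, and set $a_{ij}^k:=\tilde{a}_{ij}*\rho_k$ restricted to $\bar\Omega$. Then $a_{ij}^k\in C^\infty(\R^n)$, hence in $C^\infty(\bar\Omega)$; symmetry $a_{ij}^k=a_{ji}^k$ is inherited from $\tilde a_{ij}=\tilde a_{ji}$. Because $\tilde a_{ij}$ is uniformly continuous on a neighborhood of the compact set $\bar\Omega$, the convolutions converge uniformly on $\bar\Omega$: $\lim_{k\to\infty}a_{ij}^k=a_{ij}$ uniformly on $\bar\Omega$. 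For the ellipticity of $a_{ij}^k$, I would argue that for fixed $x$, $\sum a_{ij}^k(x)\xi_i\xi_j=\int \rho_k(y)\sum\tilde a_{ij}(x-y)\xi_i\xi_j\,dy$ is an average of the quadratic forms of $\tilde a_{ij}$ at nearby points; since $\rho_k$ is supported in a small ball and those points lie in a fixed neighborhood of $\bar\Omega$ where the form is $\ge\tfrac{\Lambda}{2}|\xi|^2$ (using part (a) plus uniform continuity to control the transition zone just outside $\bar\Omega$), the average is also $\ge\tfrac{\Lambda}{2}|\xi|^2$. Here it is cleanest to first fix a slightly larger constant $\Lambda'\in(\tfrac\Lambda2,\Lambda)$ and a neighborhood on which $\sum\tilde a_{ij}\xi_i\xi_j\ge\Lambda'|\xi|^2$, so that the mollified forms satisfy the bound with $\tfrac\Lambda2$ for all large $k$, and then absorb the finitely many small $k$ by a separate modification or simply restart the sequence.

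The only genuine subtlety is making sure the lower ellipticity bound survives the extension and mollification uniformly in $k$ and in $x$ near the boundary; everything else is routine. I would handle this by the uniform-continuity argument sketched above — pass to a uniform neighborhood of $\bar\Omega$ on which a strictly better ellipticity constant holds, then mollify at scale small enough to stay inside that neighborhood. This guarantees both assertions $\sum a_{ij}^k(x)\xi_i\xi_j\ge\tfrac{\Lambda}{2}|\xi|^2$ and uniform convergence, completing the proof.
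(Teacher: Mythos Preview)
Your proposal is correct and uses the same ingredients as the paper (Tietze extension, symmetrization, the open neighborhood where ellipticity persists, and mollification). The only organizational difference is that the paper carries out in part (a) the step you postpone: it explicitly defines the open set $\Omega_1=\{x:\sum b_{ij}(x)\xi_i\xi_j>\tfrac{\Lambda}{2}|\xi|^2\ \text{for all}\ |\xi|=1\}\supset\bar\Omega$ and glues the extension to $\tfrac{\Lambda}{2}\delta_{ij}$ via a partition of unity, obtaining $\tilde a_{ij}$ that is $\tfrac{\Lambda}{2}$-elliptic on all of $\R^n$ (this global ellipticity is what the paper actually uses later, even though the displayed inequality in the lemma is stated only for $x\in\Omega$). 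With that done, part (b) becomes immediate: the mollification of a globally $\tfrac{\Lambda}{2}$-elliptic matrix field is again $\tfrac{\Lambda}{2}$-elliptic, so no $\Lambda'$ or restarting of the sequence is needed. Your route reaches the same conclusion by putting the neighborhood argument in (b) instead of (a); the content is identical.
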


\begin{proof}
a) Let $b_{ij}:\R^n\to \R$ be a bounded, continuous extension of $a_{ij}$ to
$\R^n$. Replacing $b_{ij}$ by $\frac{b_{ij}+b_{ji}}{2}$, we may assume that
$b_{ij}=b_{ji}$. Since the function $\varphi :\R^n \times S^1 \to \R$ given by
$\varphi(x,\xi):=\sum\limits^n_{i,j=1}b_{ij}(x)\xi_i\xi_j$ is continuous and
$S^1:=\{\xi \in \R^n:|\xi|=1\}$ is compact, the set $\Omega_1:=\{x\in
\R^n:\varphi(x,\xi)>\frac{\Lambda}{2}$ for all $\xi \in S^1\}$ is open and
contains $\bar{\Omega}$. Let $0\le \varphi_1,\varphi_2 \in C(\R^n)$ such that
$\varphi_1(x)+\varphi_2(x)=1$ for all $x \in \R^n$  and $\varphi_2(x)=1$ for
$x\in\R^n\setminus\Omega_1,\varphi_1(x)=1$ for $x \in \bar{\Omega}$. Then
$\tilde{a}_{ij}:=\varphi_1 b_{ij}+\frac{\Lambda}{2}\varphi_2 \delta_{ij}$
fulfills the requirements.\\
b) Let $(\varrho_k)_{k\in\N}$ be a mollifier satisfying $\supp \varrho_k \subset
B_{1/k}(0)$. Then $a^k_{ij}=\tilde{a}_{ij}\ast \varrho_k\in C^\infty (\R^n)$
and $\lim\limits_{k\to \infty} a^k_{ij}(x)=\tilde{a}_{ij}(x)=a_{ij}(x)$
uniformly in $x \in \bar{\Omega}$. If $\frac{1}{k}<\dist
(\partial\Omega_1,\Omega)$, then for $x \in \Omega,\xi \in\R^n$
\begin{eqnarray*}
\sum\limits^n_{i,j=1}a^k_{ij}(x)\xi_i\xi_j&=&\int\limits_{|y|<1/k}\sum\limits^n_
{i,j=1}\tilde{a}_{ij}(x-y)\xi_i\xi_j\varrho_k(y)\dy\\
&\ge&\frac{\Lambda}{2}\int\limits_{|y|<1/k}\varrho_k(y)\dy=\frac{\Lambda}{2} \ .
\end{eqnarray*}
\end{proof}

\begin{theorem}\label{thm1.4}
Assume that $\Omega$ satisfies the uniform exterior cone condition. Then for
all $f\in L^n(\Omega)$ there exists a unique $u\in C_0(\Omega)\cap
W^{2,n}_{\loc}(\Omega)$ such that $-\cA u = f$.
\end{theorem}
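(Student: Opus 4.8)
The plan is to transfer Theorem \ref{thm1.1} to the non-Lipschitz case by an approximation in the second-order coefficients, using the smooth $a^k_{ij}$ produced in Lemma \ref{lem1.3}(b), and to control the limit via Aleksandrov's maximum principle (Theorem \ref{thmA1}) together with the interior Calderon-Zygmund estimate (Theorem \ref{thmA2}). Uniqueness is immediate from Theorem \ref{thmA1}: if $-\cA u = 0$ with $u \in C_0(\Omega) \cap W^{2,n}_{\loc}(\Omega)$, the maximum principle forces $u \equiv 0$; note this only uses ellipticity, $c \le 0$, and $b_j, c \in L^\infty$, all of which hold throughout. So the work is in existence.

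For existence, first reduce to bounded $f$: given $f \in L^n(\Omega)$, pick $f_k \in L^\infty(\Omega)$ with $f_k \to f$ in $L^n(\Omega)$, and once each problem $-\cA u_k = f_k$ is solved with $u_k \in C_0(\Omega) \cap W^{2,n}_{\loc}(\Omega)$, the same Aleksandrov-plus-Calderon-Zygmund compactness argument used in part (b) of the proof of Theorem \ref{thm1.1} passes to the limit and yields $u$ solving $-\cA u = f$. So fix $f \in L^\infty(\Omega)$. Now introduce, for each $k$, the operator $\cA^k u := \sum_{i,j} a^k_{ij}\partial_{ij}u + \sum_j b_j \partial_j u + cu$ with $a^k_{ij} \in C^\infty(\bar\Omega)$ from Lemma \ref{lem1.3}(b), which is uniformly elliptic with constant $\Lambda/2$ and has (globally) Lipschitz — indeed smooth — second-order coefficients. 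Since $\Omega$ satisfies the uniform exterior cone condition it is Wiener-regular, so Theorem \ref{thm1.1} applies to $\cA^k$: there is a unique $u_k \in C_0(\Omega) \cap W^{2,n}_{\loc}(\Omega)$ with $-\cA^k u_k = f$.

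The heart of the argument is then the passage $k \to \infty$. Aleksandrov's maximum principle, applied to $\cA^k$ with ellipticity constant $\Lambda/2$ uniform in $k$, gives $\|u_k\|_{L^\infty(\Omega)} \le C \|f\|_{L^n(\Omega)}$ with $C$ independent of $k$; thus $(u_k)$ is bounded in $C_0(\Omega)$. Next, writing $-\cA u_k = f + \sum_{i,j}(a^k_{ij} - a_{ij})\partial_{ij}u_k$, one cannot yet use this since no second-derivative bound on $u_k$ is known a priori; instead apply the interior Calderon-Zygmund estimate directly to $\cA^k u_k = -f$ on balls $\overline{B_{2\varrho}} \subset \Omega$ — the constant there depends on the modulus of continuity of the $a^k_{ij}$, so the key point (and the main obstacle) is that the $a^k_{ij}$ have a \emph{common} modulus of continuity on $\bar\Omega$, which they do since $a^k_{ij} = \tilde a_{ij} * \varrho_k$ and convolution with a mollifier does not worsen the modulus of continuity of the (uniformly continuous) $\tilde a_{ij}$. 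Hence $\|u_k\|_{W^{2,n}(B_\varrho)} \le c(\|u_k\|_{L^n(B_{2\varrho})} + \|f\|_{L^n(B_{2\varrho})})$ with $c$ independent of $k$, so $(u_k)$ is bounded in $W^{2,n}_{\loc}(\Omega)$. By reflexivity and a diagonal argument over an exhaustion of $\Omega$ by such balls, after passing to a subsequence $u_k \rightharpoonup u$ in $W^{2,n}_{\loc}(\Omega)$ and, by compactness of the embedding, $u_k \to u$ uniformly on compacts; combined with the uniform bound in $C_0(\Omega)$ and equicontinuity up to the boundary (again via the uniform exterior cone barrier) one gets $u \in C_0(\Omega)$. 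Finally, since $a^k_{ij} \to a_{ij}$ uniformly on $\bar\Omega$ and $\partial_{ij}u_k \rightharpoonup \partial_{ij}u$ in $L^n_{\loc}$, we may pass to the limit in $-\cA^k u_k = f$ to obtain $-\cA u = f$, and then re-run the reduction from bounded $f$ to general $f \in L^n(\Omega)$ as above. The delicate point throughout is the uniform-in-$k$ control of the Calderon-Zygmund constant and the equicontinuity at the boundary; both are secured by the uniformity built into Lemma \ref{lem1.3} and the uniform exterior cone condition.
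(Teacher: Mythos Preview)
Your approach is the same as the paper's: approximate the second-order coefficients by the smooth $a^k_{ij}$ of Lemma \ref{lem1.3}(b), solve the approximate problems via Theorem \ref{thm1.1}, and pass to the limit using Aleksandrov plus interior Calderon--Zygmund with a uniform modulus of continuity. Two points, however, deserve sharpening.

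First, the step where you obtain $u \in C_0(\Omega)$ is the crux, and your phrase ``equicontinuity up to the boundary (again via the uniform exterior cone barrier)'' is too vague to stand as written. The paper handles this by invoking Theorem \ref{thmA3} (global H\"older regularity, \cite[Corollary 9.29]{gil.tru}): under the uniform exterior cone condition one has $\|u_k\|_{C^\alpha(\Omega)} \le c(\|f\|_{L^n(\Omega)} + \|u_k\|_{L^n(\Omega)})$ with $\alpha$ and $c$ depending only on $\Omega,\Lambda,n$ --- in particular uniform in $k$ --- so Arzel\`a--Ascoli gives uniform convergence on all of $\bar\Omega$, and $u \in C_0(\Omega)$ follows immediately. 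This is exactly the packaged barrier result you are gesturing at; without citing it (or reproducing a quantitative barrier argument uniform in $k$), the boundary behaviour of the limit is not established.

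Second, the preliminary reduction to $f \in L^\infty(\Omega)$ is unnecessary. The paper works directly with $f \in L^n(\Omega)$, since Theorem \ref{thm1.1}, Aleksandrov's estimate, the interior Calderon--Zygmund estimate, and Theorem \ref{thmA3} all already accept $f \in L^n$. Your double approximation is harmless but adds a layer you do not need.
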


\begin{proof}
As for Theorem \ref{thm1.1} we merely  have to prove existence of a solution.
We choose $a^k_{ij}\in C^\infty(\bar{\Omega})$ as im Lemma
\ref{lem1.3}.
Let $\cA_k$ be the elliptic operator with the second order coefficients
$a_{ij}$ of $\cA$ replaced by $a^k_{ij}$. Let $f \in L^n(\Omega)$. By
Theorem \ref{thm1.1}, for each $k\in \N$ there exists a unique $u_k\in
W^{2,n}_{\loc}(\Omega)\cap C_0(\Omega)$ such that $-\cA_k u_k=f$.
By H\"older regularity (Theorem \ref{thmA3}) there exists a constant $c$ which
does not depend on $k\in\N$ such that
$$
\|u_k\|_{C^\alpha(\Omega)}\le c (\|f\|_{L^n(\Omega)}+\|u_k\|_{L^n(\Omega)}) \ .
$$
By Aleksandrov's maximum principle $\|u_k\|_{L^\infty(\Omega)}\le
2c_1\|f\|_{L^n(\Omega)}$ for all $k\in\N$ and some constant $c_1$. Notice that
the first order coefficients of $\cA_k$ are independent of $k \in \N$. Thus
$(u_k)_{k\in\N}$ is bounded in $C^\alpha(\Omega)$. By the Arcela-Ascoli theorem
we may assume that $u_k$ converges uniformly to $u\in C_0(\Omega)$ as $k\to
\infty$ (passing to a subsequence of necessary). Let
$\overline{B_{2\varrho}}\subset \Omega$ where $B_{2\varrho}$ is a ball of radius
$2\varrho$. Since the
modulus of continuity of the $a^k_{ij}$ is bounded, by the interior
Calderon-Zygmund estimate Theorem \ref{thmA2}
$$
\|u_k\|_{W^{2,n}(B_\varrho)}\le
c_2(\|u_k\|_{L^n(B_{2\varrho})}+\|f\|_{L^n(B_{2\varrho})})
$$
for all $k\in \N$ and  some constant $c_2$. It follows from reflexivity that $u
\in W^{2,n}(B_\varrho)$ and $u_k\rightharpoonup u$ in $W^{2,n}(B_\varrho)$ as
$k\to \infty$ after extraction of a subsequence.
Since $-\cA_k u_k=f$, it follows that $-\cA u=f$. In fact, since
$u_k\rightharpoonup u$
weakly in $W^{2,n}(B_\varrho)$, it follows that $\partial_{ij}u_k\rightharpoonup
\partial_{ij}u$ in $L^n(B_\varrho)$ as $k\to \infty$. Thus
$\sup\limits_k\|\partial_{ij}u_k\|_{L^n(B_\varrho)}<\infty$. It follows that
$$
(a^k_{ij}-a_{ij})\partial_{ij}u_k\to 0 \ \mbox{ in } \ L^n(B_\varrho) \ \mbox{
as } \ k \to \infty
$$
and consequently $a^k_{ij}\partial_{ij}u_k\rightharpoonup a_{ij}\partial_{ij}u$
in
$L^n(B_\varrho)$.
\end{proof}

\section{The Dirichlet problem} \label{secDirichlet}
In this section we show the equivalence between well-posedness of the
\textit{Poisson problem}
$$
\begin{array}{lcl}
-\cA u&=&f \\
u_{|_{\partial\Omega}}&=& 0
\end{array} \leqno(P) 
$$
and the \textit{Dirichlet problem}
$$
\begin{array}{lcl}
\cA u&=&0 \\
u_{|_{\partial\Omega}}&=& g
\end{array} \leqno(D) 
$$
where $f\in L^n(\Omega)$ and $g\in C(\partial\Omega)$ are given. We consider
the operator $\cA$ defined in the previous section and define its realization
$A$ in $L^n(\Omega)$ (recall that $\Omega\subset \R^n$) by
\begin{eqnarray*}
D(A)&:=&\{u\in C_0(\Omega) \cap W^{2,n}_{\loc}(\Omega):\cA u \in L^n(\Omega)\}\\
Au&:=&\cA u \ .
\end{eqnarray*}

Thus the Poisson problem can be formulated in a more precise way by asking
under which conditions $A$ is invertible (i.e. bijective from $D(A)$ to
$L^n(\Omega)$ with bounded inverse $A^{-1}:L^n(\Omega) \to L^n(\Omega))$.
Note that for $\mu > 0$, the operator $A-\mu:=A-\mu I$ has the same form as $A$
(the order-0-coefficient $c$ being just replaced by $c-\mu$).

\begin{proposition}\label{prop2.1}
The operator $A$ is closed and injective. Thus, $A$ is invertible whenever it
is surjective. If $A-\mu$ is invertible for some $\mu\ge 0$, then it is so for
all.
\end{proposition}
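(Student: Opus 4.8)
The engine of the whole proposition is Aleksandrov's maximum principle (Theorem \ref{thmA1}): for every $u\in C_0(\Omega)\cap W^{2,n}_{\loc}(\Omega)$ with $\cA u\in L^n(\Omega)$ one has the a priori bound
$$
\|u\|_{L^\infty(\Omega)}\le C\,\|\cA u\|_{L^n(\Omega)},
$$
with a constant $C$ that does \emph{not} depend on the zeroth order coefficient as long as the latter is $\le 0$ (it depends only on $n$, $\Lambda$, $\diam\Omega$ and $\|b_j\|_{L^\infty}$). This gives injectivity of $A$ at once ($\cA u=0\Rightarrow\|u\|_\infty=0$), and, since $A-\mu$ has the same form with $c$ replaced by $c-\mu\le 0$, injectivity of every $A-\mu$, $\mu\ge 0$, with the \emph{same} constant $C$.

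For closedness I would take $u_k\in D(A)$ with $u_k\to u$ and $\cA u_k\to f$ in $L^n(\Omega)$. Applying the a priori bound to $u_k-u_\ell$ and using that $(\cA u_k)$ is $L^n$-Cauchy, the sequence $(u_k)$ is Cauchy in $(C(\bar\Omega),\|\cdot\|_\infty)$; its uniform limit $v$ then lies in $C_0(\Omega)$ (uniform limit of functions vanishing on $\partial\Omega$), and since uniform convergence implies $L^n$-convergence we get $v=u$ a.e., so $u$ has a representative in $C_0(\Omega)$. To recover the interior regularity, fix a ball with $\overline{B_{2\varrho}}\subset\Omega$; the interior Calderon-Zygmund estimate (Theorem \ref{thmA2}) applied to $u_k-u_\ell$ shows $(u_k)$ is Cauchy in $W^{2,n}(B_\varrho)$, hence $u\in W^{2,n}(B_\varrho)$ and $u_k\to u$ there; letting the coefficients (all in $L^\infty$) act gives $\cA u_k\to\cA u$ in $L^n(B_\varrho)$, and comparison with $\cA u_k\to f$ yields $\cA u=f$ on $B_\varrho$. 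As $\varrho$ was arbitrary, $u\in C_0(\Omega)\cap W^{2,n}_{\loc}(\Omega)$ and $\cA u=f\in L^n(\Omega)$, i.e. $u\in D(A)$ and $Au=f$; thus $A$ is closed. I expect this to be the \textbf{main obstacle}: graph convergence is measured in $L^n(\Omega)$, which carries no information near $\partial\Omega$, and the real content is that Aleksandrov's principle upgrades it to uniform convergence, keeping the limit inside $C_0(\Omega)$.

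If $A$ is moreover surjective, it is a bijection of $D(A)$ onto $L^n(\Omega)$, and the a priori bound together with $\|u\|_{L^n(\Omega)}\le|\Omega|^{1/n}\|u\|_{L^\infty(\Omega)}$ gives $\|A^{-1}f\|_{L^n(\Omega)}\le C|\Omega|^{1/n}\|f\|_{L^n(\Omega)}$, so $A^{-1}$ is bounded and $A$ is invertible. Finally, set $M:=C|\Omega|^{1/n}$; by the uniform a priori bound, whenever $A-\mu$ is invertible one has $\|(A-\mu)^{-1}\|_{\mathcal{L}(L^n(\Omega))}\le M$ regardless of $\mu\ge 0$. I would then run a Neumann-series perturbation: if $A-\mu_0$ is invertible and $|\mu-\mu_0|<1/M$, the operator $I-(\mu-\mu_0)(A-\mu_0)^{-1}$ is boundedly invertible on $L^n(\Omega)$, and for $f\in L^n(\Omega)$ the element $u:=(A-\mu_0)^{-1}\big(I-(\mu-\mu_0)(A-\mu_0)^{-1}\big)^{-1}f$ lies in $D(A)$ and satisfies $(A-\mu)u=f$; hence $A-\mu$ is surjective, thus invertible by the previous step. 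Consequently the set $\{\mu\ge 0:\ A-\mu\ \text{invertible}\}$ is open, and approaching any limit point $\mu$ of it by invertible $\mu_k$ with $|\mu-\mu_k|<1/M$ shows it is also closed in $[0,\infty)$; being nonempty and clopen in the connected set $[0,\infty)$, it is all of $[0,\infty)$.
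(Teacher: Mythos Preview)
Your proof is correct and follows essentially the same approach as the paper: Aleksandrov's maximum principle for injectivity and the uniform a priori bound, the interior Calderon--Zygmund estimate for closedness, and the uniform bound plus a continuity argument for the $\mu$-independence. The only minor differences are that the paper extracts a weakly convergent subsequence in $W^{2,n}(B_\varrho)$ rather than showing the sequence is strongly Cauchy there, and invokes the method of continuity \cite[Theorem~5.2]{gil.tru} directly instead of spelling out the Neumann series/connectedness argument.
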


\begin{proof}
By the Aleksandrov maximum principle (Theorem \ref{thmA1}) there exists a
constant $c_1>0$ such that 
\begin{equation}\label{equ2.1}
\|u\|_\infty \le 2 c_1 \|\mu u-Au\|_{L^n(\Omega)}
\end{equation}
for all $u \in D(A),\mu \ge 0$. In order to show that $A$ is closed, let $u_k
\in D(A)$ such that $u_k\to u$ in $L^n(\Omega)$ and $Au_k\to f$ in
$L^n(\Omega)$. 
It follows from (\ref{equ2.1}) that $u\in C_0(\Omega)$ and
$\lim\limits_{k\to \infty} u_k=u$ in
$C_0(\Omega)$. Let $B_{2\varrho}$ be a ball of radius $2\varrho$ such that
$\overline{B_{2\varrho}} \subset \Omega$. By the Calderon-Zygmund estimate
(Theorem \ref{thmA2})
$$
\|u_k\|_{W^{2,n}(B_\varrho)} \le c_\varrho (\|u_k\|_{L^n(B_{2\varrho})} +
\|Au_k\|_{L^n(B_{2\varrho})}) \ .
$$
It follows that $(u_k)_{k\in\N}$ is bounded in $W^{2,n}(B_\varrho)$. By passing
to a subsequence we can assume that $u_k\rightharpoonup u$ in
$W^{2,n}(B_\varrho)$. Consequently $\cA u_k\rightharpoonup \cA u$ in
$L^n(B_\varrho)$. Thus $\cA u=f$ on $B_\varrho$. Since the ball is arbitrary, it
follows that
$u\in D(A)$  and $Au=f$.

Now assume that $\mu_1-A$ is invertible for some $\mu_1 \ge 0$. Let $\mu_2 \ge
0$. Define $B(t)=t(\mu_1-A)+(1-t)(\mu_2 - A)$. Since $(\mu_1-A),(\mu_2-A)\in
\cL (D(A),L^n(\Omega))$ where $D(A)$ is considered as a Banach space with
respect to the graph norm $\|u\|_A:=\|u\|_{L^n(\Omega)}+\|Au\|_{L^n(\Omega)}$,
since by (\ref{equ2.1})
$$
2c_1\|B(t)u\|_{L^n(\Omega)} \ge \|u\|_{C(\bar{\Omega})} \ge
\frac{1}{|\Omega|^{1/n}} \|u\|_{L^n(\Omega)} \ ,
$$
for all $t\in[0,1]$ and since $B(1)$ is invertible, it follows from
\cite[Theorem 5.2]{gil.tru}
that $B(0)$ is also invertible.
\end{proof}

We call a function $u$ on $\Omega$ \textit{$\cA$-harmonic} if $u \in
W^{2,p}_{\loc}(\Omega)$ for some $p>1$ and $\cA u=0$. By \cite[Theorem
9.16]{gil.tru} each $\cA$-harmonic function $u$ is in
$\bigcap\limits_{q>1}
W^{2,q}_{\loc}(\Omega)$.
Given $g\in C(\partial \Omega)$, the \textit{Dirichlet  problem} consists in
finding an $\cA$-harmonic function $u\in C(\bar{\Omega})$  such that
$u_{|_{\partial\Omega}}=g$. We say that $\Omega$ is \textit{$\cA$-regular}
if for each $g \in C(\partial\Omega)$ there is a solution of the Dirichlet
problem. Uniqueness follows from the maximum principle \cite[Theorem
9.6]{gil.tru}
\begin{equation}\label{equ2.2}
-\|u^{-}_{|_{\partial\Omega}}\|_{L^{\infty}(\partial\Omega)}\le u(x)\le
\|u^+_{|_{\partial\Omega}}\|_{L^{\infty}(\partial\Omega)}
\end{equation}
for all $x \in \bar{\Omega}$, which holds for each $\cA$-harmonic function $u
\in C(\bar{\Omega})$. In particular,
\begin{equation}\label{equ2.3}
\|u\|_{C(\bar\Omega)} \le \|u\|_{C(\partial\Omega)} \ .
\end{equation}

\begin{theorem}\label{thm2.2}
The operator $A$ is invertible if and only  if $\Omega$ is $\cA$-regular.
\end{theorem}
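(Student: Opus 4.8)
The plan is to prove the equivalence $A$ invertible $\iff$ $\Omega$ is $\cA$-regular by establishing each implication separately, using the Poisson problem (solved in Theorem \ref{thm1.4}) as the bridge.

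\textbf{From $\cA$-regularity to invertibility.} Suppose $\Omega$ is $\cA$-regular. By Proposition \ref{prop2.1}, $A$ is closed and injective, so it suffices to show surjectivity. Given $f \in L^n(\Omega)$, Theorem \ref{thm1.4} produces a $v \in C_0(\Omega) \cap W^{2,n}_{\loc}(\Omega)$ with $-\cA v = f$; but this already solves $Au = -f$ with $u = v \in D(A)$, so in fact surjectivity of $A$ is immediate from Theorem \ref{thm1.4} alone, \emph{without} using $\cA$-regularity. Hence one direction needs almost nothing. (This suggests the intended reading: Theorem \ref{thm1.4} was proved under the exterior cone condition, whereas Theorem \ref{thm2.2} is meant to hold for general bounded $\Omega$, with $\cA$-regularity as the hypothesis replacing geometric conditions. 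So I should \emph{not} invoke Theorem \ref{thm1.4} here; instead, for the forward direction I would reprove solvability of $-\cA u = f$ assuming $\cA$-regularity.) The approach for "$\cA$-regular $\Rightarrow$ $A$ surjective": first solve the Poisson problem for smooth/nice $f$ by combining a particular $W^{2,n}_{\loc}$ solution on a ball (or via the divergence-form/approximation machinery of Section \ref{secPoisson}) with a boundary correction $\cA$-harmonic function provided by $\cA$-regularity to kill the boundary trace; then pass to general $f \in L^n(\Omega)$ by the same Aleksandrov-estimate plus Calderón–Zygmund compactness argument used in Theorem \ref{thm1.1}b and Theorem \ref{thm1.4}. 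The key point is that $\cA$-regularity lets us adjust any interior solution to lie in $C_0(\Omega)$.

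\textbf{From invertibility to $\cA$-regularity.} Suppose $A$ is invertible. Let $g \in C(\partial\Omega)$. I want an $\cA$-harmonic $u \in C(\bar\Omega)$ with $u|_{\partial\Omega} = g$. First extend $g$ to some $G \in C(\bar\Omega)$; one would like $G$ to be smooth enough in $\Omega$ that $\cA G$ makes sense in $L^n_{\loc}$, but $\cA G \in L^n(\Omega)$ may fail because second derivatives of $G$ need not be globally $L^n$ near $\partial\Omega$. The fix is to approximate: the hard part is controlling behaviour near the boundary. A clean route: by $\cA$-regularity one often reduces first to $\Delta$-regularity (Wiener regularity) — but $A$ invertible for our $\cA$ should, via Aleksandrov and the a priori estimates, be shown equivalent to the analogous invertibility for $\Delta$, and Wiener regularity of $\Omega$ is classical once the Laplace–Poisson problem is solvable in $C_0(\Omega) \cap W^{2,n}_{\loc}$. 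Concretely: given $g$, pick $G \in C(\bar\Omega) \cap C^\infty(\Omega)$ with $G|_{\partial\Omega} = g$ (mollify a continuous extension inward, with the mollification radius shrinking near $\partial\Omega$ so $G$ stays continuous up to the boundary). One cannot assume $\cA G \in L^n(\Omega)$, so instead exhaust $\Omega$ by $\Omega_k \uparrow \Omega$, solve $-\cA u_k = \cA G$ on all of $\Omega$ after first truncating $\cA G$ to $\Omega_k$ (so the right-hand side is in $L^\infty \subset L^n$), set $w_k := u_k + G$, note $\cA w_k \to 0$ in an appropriate local sense, and use \eqref{equ2.2}/\eqref{equ2.3} together with the interior Hölder estimate (Theorem \ref{thmA3}) and Calderón–Zygmund estimate (Theorem \ref{thmA2}) to extract a locally uniformly convergent subsequence $w_k \to u$ with $u \in W^{2,n}_{\loc}(\Omega)$, $\cA u = 0$, and — crucially — $u \in C(\bar\Omega)$ with $u|_{\partial\Omega} = g$. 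The boundary continuity of the limit $u$ is what $A^{-1}$ buys us: the estimate $\|u_k\|_{C(\bar\Omega)} \le 2c_1\|\cA G \cdot \mathbf{1}_{\Omega_k}\|_{L^n(\Omega)}$ is not quite enough on its own, so I would instead write $u_k = -A^{-1}(\mathbf{1}_{\Omega_k}\cA G)$, show $\mathbf{1}_{\Omega_k}\cA G$ is Cauchy or at least that $A^{-1}$ maps it to a relatively compact family in $C_0(\Omega)$ (again Aleksandrov + Calderón–Zygmund + Hölder), and then $w_k = u_k + G$ converges in $C(\bar\Omega)$ with the right trace.

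\textbf{Main obstacle.} The delicate step is the second implication: manufacturing an $\cA$-harmonic function continuous up to $\partial\Omega$ with prescribed boundary data, when the only input is invertibility of $A$ on $L^n(\Omega)$ for right-hand sides that are themselves only $L^n$ and concentrated away from the boundary. Ensuring the limit $u$ genuinely attains the boundary value $g$ — rather than merely being bounded — requires a careful choice of the extension $G$ (continuous up to $\bar\Omega$, smooth inside) and a uniform-in-$k$ control that forces $w_k - G \to 0$ near $\partial\Omega$; this is where the Aleksandrov maximum principle (Theorem \ref{thmA1}) in the sharp form $\|u\|_\infty \le 2c_1\|\cA u\|_{L^n}$ does the essential work, by pinning $w_k - G = u_k$ small in sup-norm as the truncated right-hand side $\mathbf{1}_{\Omega_k}\cA G$ is made to converge in $L^n(\Omega)$ (possible because, by construction, $\cA G \in L^n(\Omega)$ after all if $G$ is chosen with enough decay of its second derivatives — e.g.\ $G$ times a cutoff, handled via the divergence-form trick of Section \ref{secPoisson}). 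I expect the write-up to hinge on getting that extension right.
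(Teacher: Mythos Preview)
Your outline has the right skeleton, and for the implication ``$\cA$-regular $\Rightarrow A$ surjective'' your parenthetical idea is exactly what the paper does: extend the coefficients as in Lemma~\ref{lem1.3}a, take a ball $B\supset\bar\Omega$ (which trivially satisfies the uniform exterior cone condition), extend $f$ by zero, apply Theorem~\ref{thm1.4} on $B$ to get $v\in C_0(B)\cap W^{2,n}_{\loc}(B)$ with $\tilde\cA v=f$, and then subtract the $\cA$-harmonic $w$ with $w_{|\partial\Omega}=v_{|\partial\Omega}$ supplied by $\cA$-regularity. No further approximation in $f$ is needed; Theorem~\ref{thm1.4} already handles all $f\in L^n$.

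The real issue is your treatment of ``$A$ invertible $\Rightarrow\Omega$ is $\cA$-regular''. You try to extend an \emph{arbitrary} $g\in C(\partial\Omega)$ to $G\in C(\bar\Omega)\cap C^\infty(\Omega)$ and then wrestle with the fact that $\cA G$ need not lie in $L^n(\Omega)$; your proposed cure (truncate to $\mathbf 1_{\Omega_k}\cA G$, apply $A^{-1}$, extract a limit) does not close, because if $\cA G\notin L^n(\Omega)$ then $\|\mathbf 1_{\Omega_k}\cA G\|_{L^n}$ is unbounded and the Aleksandrov estimate gives no uniform control on $u_k$ --- so you cannot conclude that $w_k-G=u_k$ stays small near $\partial\Omega$. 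Your last sentence (``$\cA G\in L^n$ after all if $G$ is chosen with enough decay\ldots'') is a hope, not an argument, and ``$G$ times a cutoff'' destroys the boundary trace.

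The paper sidesteps all of this with a two-step density argument. \emph{Step 1:} restrict first to $g$ of the form $g=G_{|\partial\Omega}$ with $G\in C^2(\bar\Omega)$. Then $\cA G\in L^\infty(\Omega)\subset L^n(\Omega)$ is automatic, $v:=A^{-1}(\cA G)\in C_0(\Omega)$ exists, and $u:=G-v$ is the desired $\cA$-harmonic function with boundary value $g$. \emph{Step 2:} for general $g$, mollify a continuous extension to produce $g_k$ of the Step-1 type with $g_k\to g$ in $C(\partial\Omega)$; the maximum principle \eqref{equ2.3} forces the corresponding $u_k$ to be Cauchy in $C(\bar\Omega)$, and the interior Calder\'on--Zygmund estimate (Theorem~\ref{thmA2}) upgrades this to weak $W^{2,p}$ convergence on balls, so the limit $u$ is $\cA$-harmonic with $u_{|\partial\Omega}=g$. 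The point you missed is that approximating the \emph{boundary datum} (rather than exhausting $\Omega$) is what makes $\cA G\in L^n$ come for free.
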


\begin{proof}
a) Assume that $A$ is invertible.\\
\textit{First step:}
Let $g\in C(\partial \Omega)$ be of the form $g=G_{|_{\partial\Omega}}$
where $G\in C^2(\bar{\Omega})$. Then $\cA G \in L^n (\Omega)$. Let
$v=A^{-1}(\cA G)$, then $u:=G-v$ solves the
Dirichlet problem for $g$.\\
\textit{Second step:}
Let $g\in C(\partial\Omega)$ be arbitrary. Extending $g$ continuously and
mollifying we find $g_k\in C(\partial \Omega)$ of the kind considered in the
first step
such that $g=\lim\limits_{k\to \infty}g_k$ in $C(\partial \Omega)$. Let $u_k\in
C(\bar{\Omega})$ be $\cA$-harmonic satisfying $u_k{|_{\partial\Omega}}=g_k$.
By (\ref{equ2.3})
$u:=\lim\limits_{k\to \infty} u_k \ \mbox{ exists in } \ C(\bar{\Omega})$.
In particular, $u{|_{\partial\Omega}}=g$.
Let $\overline{B_{2\varrho}}\subset \Omega$. Then by the Calderon-Zygmund
estimate Theorem \ref{thmA2}
$$
\|u_k\|_{W^{2,p}(B_{\varrho})} \le c_\varrho \|u_k\|_{L^p(B_{2\varrho})} \le
c_\varrho c \|u_k\|_{C(\bar{\Omega})}
$$
(remember that $\cA u_k=0$). Thus $(u_k)_{k\in\N}$ is bounded in
$W^{2,p}(B_\varrho)$. Passing to a subsequence,
we can assume that $u_k\rightharpoonup u$ in $W^{2,p}(B_\varrho)$. This implies
that $\cA u=0$ in
$B_\varrho$. Since the
ball is arbitrary, it follows that $u$ is $\cA$-harmonic. Thus $u$ is a
solution of the Dirichlet problem $(D)$.\\
b) Conversely, assume that $\Omega$ is $\cA$-regular. Let $f\in L^n(\Omega)$.
We want to find $u \in D(A)$  such that $Au=f$. Let $B$ be a ball containig
$\bar{\Omega}$ and extend $f$ by $0$ to $B$. Then by Theorem \ref{thm1.4} we 
find $v\in C_0(B)\cap W^{2,n}_{\loc}(B)$ such that $\tilde{\cA v}=f$. 
Here $\tilde{\cA}$ is an extension of $\cA$ to the ball $B$ according to Lemma
\ref{lem1.3}a.
Let
$g=v_{|_{\partial\Omega}}$. Then by our assumption there exists an
$\cA$-harmonic function $w \in C(\bar{\Omega})$ such that
$w_{|_{\partial\Omega}}=g$. Let $u=v-w$. Then $u\in C_0(\Omega)\cap
W^{2,n}_{\loc}(\Omega)$ and $\cA u=\cA v=f$; i.e. $u \in D(A)$ and $Au=f$. We
have shown that $A$ is surjective, which implies invertibility by
Proposition \ref{prop2.1}.
\end{proof}

\begin{corollary}\label{cor2.3}
Assume that one of the following two conditions is satisfied:\\
a) $\Omega$ is Wiener regular and the coefficients $a_{ij}$ are globally
Lipschitz
continuous, or\\
b) $\Omega$ satisfies the exterior cone condition.\\
Then $\Omega$ is $\cA$-regular. More generally, for all $f\in L^n(\Omega),g\in
C(\partial \Omega)$ there exists a unique $u\in C(\bar{\Omega})\cap
W^{2,n}_{\loc}(\Omega)$ satisfying
\begin{eqnarray*}
-\cA u&=&f\\
u_{|_{\partial\Omega}}&=& g \ .
\end{eqnarray*}
\end{corollary}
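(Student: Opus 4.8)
The plan is to reduce everything to what has already been established: the solvability of the Poisson problem $-\cA u=f$ with zero boundary data (Theorems~\ref{thm1.1} and~\ref{thm1.4}) together with the equivalence between invertibility of the realization $A$ in $L^n(\Omega)$ and $\cA$-regularity of $\Omega$ (Theorem~\ref{thm2.2}). The inhomogeneous Dirichlet problem is then obtained by superposition.

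First I would verify that $A$ is surjective under either hypothesis. Given $h\in L^n(\Omega)$, one applies Theorem~\ref{thm1.1} in case (a) — here one uses that ``Wiener regular'' is exactly the regularity hypothesis of that theorem and that the $a_{ij}$ are globally Lipschitz — respectively Theorem~\ref{thm1.4} in case (b), to the right-hand side $-h$. This produces $u\in C_0(\Omega)\cap W^{2,n}_{\loc}(\Omega)$ with $-\cA u=-h$, i.e. $\cA u=h\in L^n(\Omega)$; hence $u\in D(A)$ and $Au=h$. Since $A$ is closed and injective by Proposition~\ref{prop2.1}, surjectivity forces $A$ to be invertible, and Theorem~\ref{thm2.2} then yields that $\Omega$ is $\cA$-regular. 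This already proves the first assertion of the corollary.

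For the general statement, fix $f\in L^n(\Omega)$ and $g\in C(\partial\Omega)$. I would set $u_1:=-A^{-1}f$, so that $u_1\in D(A)\subset C_0(\Omega)\cap W^{2,n}_{\loc}(\Omega)$, $-\cA u_1=f$ and $u_1|_{\partial\Omega}=0$. By $\cA$-regularity there is an $\cA$-harmonic $u_2\in C(\bar{\Omega})$ with $u_2|_{\partial\Omega}=g$, and by the regularity remark recorded after the definition of $\cA$-harmonicity (namely $u_2\in\bigcap_{q>1}W^{2,q}_{\loc}(\Omega)$) we have $u_2\in W^{2,n}_{\loc}(\Omega)$. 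Then $u:=u_1+u_2$ lies in $C(\bar{\Omega})\cap W^{2,n}_{\loc}(\Omega)$, satisfies $-\cA u=f$, and has boundary trace $g$. Uniqueness is immediate: the difference $w$ of two solutions is $\cA$-harmonic, belongs to $C(\bar{\Omega})$, and vanishes on $\partial\Omega$, so $w\equiv 0$ by the maximum principle~(\ref{equ2.3}) (equivalently by Aleksandrov's maximum principle, Theorem~\ref{thmA1}).

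I do not expect a genuine obstacle: the mathematical substance sits entirely in the cited theorems. The only points that need a little care are the bookkeeping of signs when converting ``$-\cA u=f$'' into surjectivity of $A$ (recall $A=\cA$ on its domain), checking that the hypotheses of Theorem~\ref{thm1.1} and Theorem~\ref{thm1.4} match cases (a) and (b) exactly, and noting that an $\cA$-harmonic function automatically carries enough interior regularity to be added to $u_1$ without leaving the class $W^{2,n}_{\loc}(\Omega)$.
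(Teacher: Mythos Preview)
Your proposal is correct and follows essentially the same route as the paper: invoke Theorem~\ref{thm1.1} or Theorem~\ref{thm1.4} to get surjectivity (hence invertibility via Proposition~\ref{prop2.1}) of $A$, deduce $\cA$-regularity from Theorem~\ref{thm2.2}, and then superpose a Poisson solution with an $\cA$-harmonic function matching $g$; uniqueness via the maximum principle. The only cosmetic difference is that the paper cites Theorem~\ref{thmA1} directly for uniqueness, whereas you phrase it through~(\ref{equ2.3}) --- both are fine.
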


\begin{proof}
Since $A$ is closed by Proposition \ref{prop2.1}
it follows from Theorem \ref{thm1.1} (in the case a)) and from Theorem
\ref{thm1.4}
(in the case b)) that $A$ is invertible. Thus $\Omega$ is $\cA$ regular by
Theorem \ref{thm2.2}. Let $f\in L^n(\Omega),g\in C(\partial \Omega)$. Since
$\Omega$ is $\cA$-regular, there exists an $\cA$-harmonic function $u_1 \in
C(\bar{\Omega})$ such that $u_{1_{|_{\partial\Omega}}}=g$. Since $A$ is
invertible, there exists a function $u_0 \in C_0(\Omega)\cap
W^{2,n}_{\loc}(\Omega)$ such that $-\cA u_0=f$. Let $u:=u_0+u_1$. Then $u\in
C(\bar{\Omega})\cap W^{2,n}_{\loc}(\Omega), u_{|_{\partial\Omega}}=g$ and $-\cA
u=f$. Uniqueness follows from Theorem \ref{thmA1}.
\end{proof}

For the Laplacian $\cA=\Delta,\Delta$-regularity is the usual regularity of
$\Omega$ with
respect to
the classical Dirichlet problem, which is frequently called
\textit{Wiener-regularity} because of Wiener's characterization via capacity
\cite[(2.37)]{gil.tru}. It
is a most interesting question how $\cA$-regularity and $\Delta$-regularity are
related. In general it is not true that $\cA$-regularity implies Wiener
regularity. In fact, K. Miller \cite{Mil} gives an example of an elliptic
operator $\cA$ with $b_j=c=0$ such that the pointed unit disc $\{x\in
\R^2:0<|x|<1\}$ is $\cA$-regular even though it is not $\Delta$-regular. The
other implication seems to be open. The fact that the uniform exterior cone
property (which is much stronger than $\Delta$-regularity) implies
$\cA$-regularity (Corollary \ref{cor2.3}) had been proved before by Krylov
\cite[Theorem 5]{Kry67} with  the help of probabilistic methods. If $\Omega$ is
merely $\Delta$-regular, then it seems not to be known whether $\Omega$ is
$\cA$-regular. Known results concerning this question are based on further
restrictive conditions on the coefficients $a_{ij}$. In Theorem \ref{thm1.1} we
gave a proof for globally Lipschitz continuous $a_{ij}$.
The best result seems to
be \cite[Theorem 4]{Kry67}  which goes in both directions:
If the $a_{ij}$ are Dini-continuous (in particular, if they are
H\"older-continuous), then $\Omega$ is $\Delta$-regular if and only if $\Omega$
is $\cA$-regular.

\section{Generation results}\label{secGener}
An operator $B$ on a complex Banach space $X$ is said to \textit{generate a
bounded holomorphic semigroup} if $(\lambda-B)$ is invertible for $\Re \lambda
> 0$ and
$$
\sup\limits_{\Re \lambda > 0} \|\lambda (\lambda-B)^{-1}\| < \infty \ .
$$
Then there exist $\theta \in (0,\pi/2)$ and a holomorphic bounded function
$T:\Sigma_\theta \to \cL(X)$ satisfying $T(z_1+z_2)=T(z_1)T(z_2)$ such that
\begin{equation}\label{equ3.1}
\lim\limits_{n\to \infty} e^{tB_n}=T(t) \ \mbox{ in } \ \cL(X) 
\end{equation}
for all $t>0$, where $B_n=nB(n-B)^{-1} \in \cL(X)$. Here $\Sigma_\theta$ is the
sector $\Sigma_\theta:=\{re^{i\alpha}:r>0,|\alpha|<\theta\} $.

If $B$ is an operator on a reel Banach space $X$ we say that $B$
\textit{generates a bounded holomorphic semigroup} if its linear extension
$B_{\C}$ to the complexification $X_{\C}$ of $X$ generates a bounded
holomorphic semigroup $T_\C$ on $X_\C$. In that case
$T_{\C}(t)X\subseteq X$ (see \cite[Corollary 2.1.3]{Lu}); in
particular $T(t):=T_{\C}(t)_{|X} \in \cL(X)$. We call $T=(T(t))_{t>0}$ the
semigroup generated by $B$. It satisfies $\lim\limits_{t\downarrow 0}T(t)x=x$
for all $x \in X$ (i.e., it is a $C_0$-semigroup) if and only if
$\overline{D(B)}=X$. We refer to \cite[Chapter 2]{Lu} and \cite[Sec.
3.7]{ABHN01} for these facts and further information.\\

In this section we consider the parts $A_c$ and $A_0$ of $\cA$ in
$C(\bar{\Omega})$ and $C_0(\Omega)$ as follows:
\begin{eqnarray*}
D(A_c)&:=&\{u\in C_0(\Omega)\cap W^{2,n}_{\loc}(\Omega):\cA u \in
C(\bar{\Omega})\}\\
A_c u&:=& \cA u \quad \mbox{ and}\\
D(A_0)&:=&\{u\in C_0(\Omega)\cap W^{2,n}_{\loc}(\Omega):\cA u \in
C_0(\Omega)\}\\
A_0u&:=& \cA u \ . 
\end{eqnarray*}

Thus $A_c$ is the part of $A$ in $C(\bar{\Omega})$ and $A_0$ the part of $A_c$
in $C_0(\Omega)$. Note that $D(A_0)\subseteq D(A_c)\subseteq
\bigcap\limits_{q>1}W^{2,q}_{\loc}$ by \cite[Lemma 9.16]{gil.tru}.
The main result of this section is the following.

\begin{theorem}\label{thm3.1}
Assume that $\Omega$ is $\cA$-regular. Then $A_c$ generates a bounded
holomorphic semigroup $T$ on $C(\bar{\Omega})$. The operator $A_0$ generates a
bounded holomorphic $C_0$-semigroup $T_0$ on $C_0(\Omega)$. Moreover,
$T(t)C_0(\Omega)\subseteq C_0(\Omega)$ and
$$
T_0(t)=T(t)_{|_{C_{0}(\Omega)}} \ .
$$
\end{theorem}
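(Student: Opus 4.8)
The plan is to verify the resolvent characterization of bounded holomorphic semigroup generators directly: show that $\lambda - A_c$ is invertible for $\Re\lambda > 0$ with $\|\lambda(\lambda - A_c)^{-1}\|_{\mathcal{L}(C(\bar\Omega))} \le M$ uniformly. The key analytic input is that the Aleksandrov maximum principle (Theorem \ref{thmA1}), which gives $\|u\|_\infty \le 2c_1\|\mu u - Au\|_{L^n(\Omega)}$ for real $\mu \ge 0$, has to be upgraded to a bound of the form $\|u\|_\infty \le \frac{M}{|\lambda|}\|\lambda u - \mathcal{A}u\|_\infty$ for complex $\lambda$ with $\Re\lambda > 0$. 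First I would pass to the complexification and split $u = u_1 + iu_2$; applying $\mathcal{A}$ componentwise and using that $\mathcal{A}$ has real coefficients with $c \le 0$, one estimates $\Re\lambda \cdot u_j$ and $\mathrm{Im}\,\lambda \cdot u_j$ via the real maximum principle applied to suitable real linear combinations, the standard argument yielding $|\lambda|\,\|u\|_\infty \le M\|\lambda u - \mathcal{A}u\|_\infty$ with $M$ depending only on $n$, $\Lambda$, $\mathrm{diam}\,\Omega$ and $\|b_j\|_\infty$. Since $C(\bar\Omega) \hookrightarrow L^n(\Omega)$, this in particular forces injectivity of $\lambda - A_c$ and a bound on the inverse once surjectivity is known.

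\textbf{Surjectivity.} For surjectivity of $\lambda - A_c$ with $\Re\lambda > 0$: given $f \in C(\bar\Omega)$, I would first solve in $L^n$. Writing $\mathcal{A}_\lambda := \mathcal{A} - \lambda$ (for real $\lambda > 0$ this is again an operator of the admissible form with $c$ replaced by $c - \lambda \le 0$), Theorem \ref{thm1.4} — applicable since $\Omega$ satisfies the exterior cone condition, hence is $\mathcal{A}$-regular, and more to the point Corollary \ref{cor2.3} and Theorem \ref{thm2.2} give invertibility of the $L^n$-realization $A$ — produces $u \in C_0(\Omega) \cap W^{2,n}_{\mathrm{loc}}(\Omega)$ with $(\lambda - \mathcal{A})u = f$. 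Since $f \in C(\bar\Omega)$ we have $\mathcal{A}u = \lambda u - f \in C(\bar\Omega)$, so $u \in D(A_c)$; thus $(\lambda - A_c)u = f$. For complex $\lambda$ one would either check that the argument of Section 1–2 goes through verbatim for complex zero-order coefficient with nonpositive real part (the only place reality is used is the maximum principle, already handled above), or run a continuity/Neumann-series argument in $\lambda$ starting from the real axis using the uniform resolvent bound — the method of continuity as in Proposition \ref{prop2.1}. Combined with the resolvent estimate, this shows $A_c$ generates a bounded holomorphic semigroup $T$ on $C(\bar\Omega)$.

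\textbf{Invariance of $C_0(\Omega)$ and the $C_0$-semigroup.} Next I would show $T(t)C_0(\Omega) \subseteq C_0(\Omega)$. The cleanest route is at the resolvent level: if $f \in C_0(\Omega)$, then the solution $u$ of $(\lambda - A_c)u = f$ constructed above already lies in $C_0(\Omega)$ (indeed $u \in D(A)$, and $D(A) \subseteq C_0(\Omega)$ by definition), so $(\lambda - A_c)^{-1}C_0(\Omega) \subseteq C_0(\Omega)$; since $C_0(\Omega)$ is closed and $T(t) = \lim_{n\to\infty} e^{tB_n}$ with $B_n = nA_c(n - A_c)^{-1}$ a limit of operators built from resolvents, invariance passes to $T(t)$. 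Then $T_0(t) := T(t)|_{C_0(\Omega)}$ is a bounded holomorphic semigroup on $C_0(\Omega)$ whose generator is exactly the part of $A_c$ in $C_0(\Omega)$, which is $A_0$. Finally, to see $T_0$ is a $C_0$-semigroup I must check $\overline{D(A_0)} = C_0(\Omega)$: given $f \in C_0(\Omega)$, $\lambda(\lambda - A_0)^{-1}f \in D(A_0)$ and $\lambda(\lambda - A_0)^{-1}f \to f$ as $\lambda \to \infty$ along the reals by the uniform bound together with density of a convenient dense set (e.g. smooth compactly supported functions, for which convergence is classical from the interior Calderón–Zygmund estimate Theorem \ref{thmA2}).

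\textbf{Main obstacle.} The technical heart is the complex Aleksandrov estimate $|\lambda|\,\|u\|_\infty \le M\,\|\lambda u - \mathcal{A}u\|_\infty$ uniformly for $\Re\lambda > 0$; everything else is assembled from Sections 1–2 and soft semigroup theory. One must be careful that the constant $M$ does not degenerate as $\arg\lambda \to \pm\pi/2$, which is what forces the holomorphy (sectoriality) rather than merely strong continuity — handling the imaginary part of $\lambda$ via the real maximum principle applied to $\Re(e^{-i\arg\lambda}u)$-type combinations, rather than naively to $u_1, u_2$ separately, is the point that needs care.
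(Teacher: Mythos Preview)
The proposal has a real gap at precisely the point you flag as the ``main obstacle'': you assert a uniform estimate $|\lambda|\,\|u\|_\infty \le M\,\|\lambda u - \cA u\|_\infty$ for $\Re\lambda>0$, but the argument you sketch does not deliver it. Evaluating $\Re[\overline{u(x_0)}\cA u(x_0)]\le 0$ at a maximum of $|u|$ (Lemma~\ref{lem3.2}) and inserting $\cA u=\lambda u-f$ yields only $\Re\lambda\,|u(x_0)|\le \|f\|_\infty$, i.e.\ $m$-dissipativity with constant $1/\Re\lambda$, not $1/|\lambda|$. Your suggestion of applying the real maximum principle to combinations like $\Re(e^{-i\arg\lambda}u)$ does not decouple the system: writing $u=u_1+iu_2$ and $\lambda=\alpha+i\beta$, the equations $(\alpha-\cA)u_1-\beta u_2=f_1$, $(\alpha-\cA)u_2+\beta u_1=f_2$ are genuinely coupled through the imaginary part $\beta$, and no real rotation of $(u_1,u_2)$ removes the off-diagonal $\beta$-terms while keeping $\cA$ real. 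So as written the argument gives a contraction semigroup at best, not holomorphy.

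The paper's proof avoids this by never attempting an inhomogeneous complex estimate on $\Omega$. Instead it imports the sectorial estimate from the whole-space operator $B_\infty$ on $L^\infty(\R^n)$ (Theorem~\ref{thm3.7}, obtained from Lunardi's result for the principal part plus a relatively bounded perturbation, Lemma~\ref{lem3.6}). Given $f\in C(\bar\Omega)$, one extends $f$ by zero, sets $v=(\lambda-B_\infty)^{-1}f$ so that $\|v\|_\infty\le (M/|\lambda|)\|f\|_\infty$, and then $w:=v-u$ solves the \emph{homogeneous} equation $\lambda w-\cA w=0$ on $\Omega$ with $w|_{\partial\Omega}=v|_{\partial\Omega}$. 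Here the complex maximum principle (Proposition~\ref{prop3.3}, which is Lemma~\ref{lem3.2} applied to $\cA u=\lambda u$) gives $\|w\|_{C(\bar\Omega)}=\max_{\partial\Omega}|v|\le (M/|\lambda|)\|f\|$, whence $\|u\|\le 2M/|\lambda|\,\|f\|$. This ``Lumer--Paquet'' transfer is the missing idea. The remaining steps (resolvent positivity via Proposition~\ref{prop3.5} to handle $|\lambda|$ small, $\overline{D(A_c)}=C_0(\Omega)$ from $\cD(\Omega)\subset D(A_c)\subset C_0(\Omega)$, and passage to the part $A_0$) are close to what you wrote. A smaller issue: you invoke Theorem~\ref{thm1.4}, which needs the uniform exterior cone condition, whereas the hypothesis here is only $\cA$-regularity; the correct input for surjectivity of $\lambda-A_c$ is Theorem~\ref{thm2.2} together with Proposition~\ref{prop2.1}.
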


Recall that $\Omega$ is $\cA$-regular if one of the following conditions is
satisfied:
\begin{enumerate}
 \item [(a)]
$\Omega$ satisfies the uniform exterior cone condition or
 \item [(b)]
$\Omega$ is Wiener regular and the coefficients $a_{ij}$ are Dini-continuous.\\
In particular, $\Omega$ is $\cA$-regular if
 \item [(a')]
$\Omega$ is a Lipschitz-domain or
 \item [(b')]
$\Omega$ is Wiener-regular and the $a_{ij}$ are H\"older continuous.
\end{enumerate}

In the following complex maximum principle (Proposition \ref{prop3.3}) we
extend $\cA$ to the complex space $W^{2,p}_{\loc}(\Omega)$ without changing the
notation. We first proof a lemma.

\begin{lemma}\label{lem3.2}
Let $B\subseteq \Omega$ be a ball of center $x_0$ and let $u\in
W^{2,p}(B),p>n$, be a
complex-valued function such that $\cA u \in C(B)$. If $|u(x_0)|\ge |u(x)|$ for
all $x \in B$, then
$$
\Re \left[ \overline{u(x_0)} (\cA u)(x_0\right] \le 0 \ .
$$
\end{lemma}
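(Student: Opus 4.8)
The plan is to reduce the (complex) assertion to a real‑variable maximum principle and then, since $u$ need not be $C^2$, replace the classical second‑derivative test by a perturbation argument combined with Aleksandrov's maximum principle. First I would dispose of the case $u(x_0)=0$: then $|u(x)|\le|u(x_0)|=0$ on $B$, so $u\equiv 0$, $\cA u\equiv 0$, and the claimed inequality reads $0\le 0$. So assume $M:=|u(x_0)|>0$. For any constant $\zeta\in\C$ one has $\cA(\zeta u)=\zeta\,\cA u$, hence
$$
\overline{(\zeta u)(x_0)}\,\bigl(\cA(\zeta u)\bigr)(x_0)=|\zeta|^2\,\overline{u(x_0)}\,(\cA u)(x_0),
$$
and $|\zeta u(x)|=|\zeta|\,|u(x)|$; taking $|\zeta|=1$, both the hypothesis and the quantity to be estimated are unchanged, so I may assume $u(x_0)=M>0$. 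Set $w:=\Re u$. Since the coefficients of $\cA$ are real, $\cA w=\Re(\cA u)\in C(B)$; moreover $w\in W^{2,p}(B)$, $w(x_0)=M$, and $w(x)=\Re u(x)\le|u(x)|\le M$ for all $x\in B$, so $w$ attains its supremum over $B$ at the interior point $x_0$. It therefore suffices to prove $(\cA w)(x_0)\le 0$.

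Assume, for a contradiction, that $(\cA w)(x_0)=\delta>0$. Using $\cA w\in C(B)$, fix $r_0\le 1$ with $\overline{B_{r_0}(x_0)}\subset B$ and $\cA w\ge\delta/2$ on $\overline{B_{r_0}(x_0)}$. Writing $\phi(x):=|x-x_0|^2$, a direct computation gives, a.e.\ on $B_{r_0}(x_0)$,
$$
\cA\phi=2\sum_{i=1}^n a_{ii}+2\sum_{j=1}^n b_j(x_j-x_{0,j})+c\,\phi\ \le\ \bar C
$$
for a constant $\bar C>0$ depending only on $n$ and on $\|a_{ij}\|_\infty,\|b_j\|_\infty$ (here $c\le 0$ and $\phi\ge 0$ kill the last term). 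Choose $\varepsilon:=\delta/(4\bar C)>0$ and $r:=\min\{r_0,\sqrt{M/(2\varepsilon)}\}$, and put $\tilde w:=w-\varepsilon\phi$ on $\overline{B_r(x_0)}$. Then $\tilde w\in W^{2,n}(B_r(x_0))\cap C(\overline{B_r(x_0)})$, and a.e.\ on $B_r(x_0)$
$$
\cA\tilde w=\cA w-\varepsilon\,\cA\phi\ \ge\ \tfrac{\delta}{2}-\varepsilon\bar C=\tfrac{\delta}{4}>0,
$$
so $(\cA\tilde w)^-\equiv 0$. On the other hand $\tilde w(x_0)=w(x_0)=M$, while for $x\in\partial B_r(x_0)$ we have $\tilde w(x)=w(x)-\varepsilon r^2\le M-\varepsilon r^2$, and $M-\varepsilon r^2\ge M/2>0$; hence
$$
\sup_{B_r(x_0)}\tilde w^+\ \ge\ M\ >\ M-\varepsilon r^2\ \ge\ \sup_{\partial B_r(x_0)}\tilde w^+ .
$$
But Aleksandrov's maximum principle (Theorem \ref{thmA1}), applied to the strong subsolution $\tilde w$ on the ball $B_r(x_0)$ with $c\le 0$, yields $\sup_{B_r(x_0)}\tilde w^+\le\sup_{\partial B_r(x_0)}\tilde w^+$ because $(\cA\tilde w)^-=0$ — a contradiction. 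Hence $(\cA w)(x_0)\le 0$, which is the claim.

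The only genuine obstacle is the one the perturbation is designed to circumvent: because the $a_{ij}$ are merely continuous, $u$ (hence $w$) need not be twice continuously differentiable at $x_0$, so one cannot simply invoke $\nabla w(x_0)=0$ and $D^2w(x_0)\le 0$ to conclude $\cA w(x_0)=\operatorname{tr}\bigl(a(x_0)D^2w(x_0)\bigr)+c(x_0)w(x_0)\le 0$. Subtracting $\varepsilon|x-x_0|^2$ on a small ball produces a strict gap between the interior maximum $M$ and the boundary values $M-\varepsilon r^2$ while keeping $\cA$ strictly positive, and then Aleksandrov's inequality for $W^{2,n}$ strong solutions delivers exactly what a second‑derivative test would give in the smooth case. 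The numerical choices of $r_0,\varepsilon,r$ serve only to guarantee that the lower‑order contribution $\varepsilon\,\cA\phi$ cannot cancel the margin $\delta/2$ and that $\varepsilon r^2<M$; everything else is routine.
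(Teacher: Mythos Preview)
Your proof is correct, and the core mechanism is the same as the paper's: assume the inequality fails, perturb by $-\varepsilon|x-x_0|^2$ to manufacture a strict gap between the interior value and the boundary values while keeping $\cA$ of the perturbed function strictly positive, then contradict Aleksandrov's maximum principle for $W^{2,n}$ subsolutions. The difference lies only in the reduction from the complex to the real case. The paper applies $\cA$ directly to $|u|^2$ and uses ellipticity of $(a_{ij})$ to show that the gradient cross-terms $\sum_{i,j}a_{ij}\,\partial_i u\,\overline{\partial_j u}$ have nonnegative real part, obtaining $\cA|u|^2\ge 2\Re(\bar u\,\cA u)$; it then perturbs $|u|^2$. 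You instead rotate so that $u(x_0)$ is real and positive and pass to $w=\Re u$, which satisfies $\cA w=\Re(\cA u)$ because the coefficients are real; this sidesteps the computation of $\cA|u|^2$ and the appeal to ellipticity for the cross-terms altogether. Your reduction is a touch more elementary; the paper's inequality $\cA|u|^2\ge 2\Re(\bar u\,\cA u)$ is, on the other hand, a reusable estimate in its own right. (Your constraint $r\le\sqrt{M/(2\varepsilon)}$ is harmless but unnecessary: even if $M-\varepsilon r^2<0$ one still has $\sup_{\partial B_r}\tilde w^+\le\max\{M-\varepsilon r^2,0\}<M$.)
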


\begin{proof}
We  may assume that $x_0=0$. If the claim is wrong, then there exist
$\varepsilon > 0$ and a ball $B_\varrho \subset B$ such that $\Re \left[
\overline{u(x)} (\cA u)(x)\right] \ge \varepsilon$ on $B_\varrho$.\\
Since $\partial_j|u|^2=(\partial_j u){\bar{u}}+u\overline{\partial_ju}=2 \Re
\left[ \partial_j u \bar{u}\right]$, and
$\partial_{ij}(u\bar{u})=(\partial_{ij}u)\bar{u}+\partial_iu\overline{\partial_j
u} +\partial_ju\overline{\partial_i u}+u\overline{\partial_{ij}u}$, and since by
ellipticity
$$
\Re \sum\limits_{i,j}a_{ij}\partial_i u\overline{\partial_ju}\ge 0 \ , \ \Re
\sum\limits_{i,j}a_{ij}\partial_j u \overline{\partial_iu} \ge 0 \ ,
$$
it follows that
\begin{eqnarray*}
\cA |u|^2&\ge & \Re \sum\limits_{i,j}a_{ij} (\partial_{ij} u)\bar{u}
+\Re \sum\limits_{i,j}a_{ij}u\overline{\partial_{ij}u}\\
&& + \sum\limits_j b_j 2 \Re \left[\partial_ju\bar{u} \right] +cu\bar{u}\\
&\ge& 2\Re \left( \cA u\bar{u}\right) \ge 2 \varepsilon \ \mbox{ on } \
B_\varrho \ .
\end{eqnarray*}
Let $\psi(x)=|u|^2-\tau|x|^2,\tau > 0$. Then $\cA|\psi|^2\ge 2 \varepsilon
-c_1\tau$ on $B_\varrho$ for all $\tau > 0$ and some $c_1>0$. Choosing $\tau >0$
small enough, we have $\cA|\psi|^2 \ge \varepsilon$ on $B_\varrho$.\\
Since $\psi \in W^{2,p}(B_\varrho)\cap C(\overline{B_\varrho})$, by
Aleksandrov's  maximum principle \cite[Theorem 9.1]{gil.tru}, see Theorem \ref
{thmA1}, it follows that
\begin{eqnarray*}
|u(0)|^2=|\psi(0)|^2&\le&\sup\limits_{\partial B_\varrho(0)} \psi\\
&=&\sup\limits_{\partial B_\varrho(0)} |u|^2-\tau\varrho^2\\
&\le& |u(0)|^2-\tau\varrho^2 < |u(0)|^2 \ ,
\end{eqnarray*}
a contradiction.
\end{proof}

\begin{proposition}\label{prop3.3} \textit{(complex maximum principle)}.
Let $u\in C(\bar{\Omega})\cap W^{2,n}_{\loc}(\Omega)$ such that $\lambda u-\cA
u=0$ where $\Re \lambda > 0$. If there exists $x_0 \in \Omega$  such that
$|u(x)|\le |u(x_0)|$   
for all $x\in \Omega$, then $u\equiv 0$. Consequently,
$$
\max\limits_{\bar{\Omega}} |u(x)|=\max\limits_{\partial \Omega} |u(x)| \ .
$$
\end{proposition}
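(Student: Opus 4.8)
The plan is to reduce everything to Lemma \ref{lem3.2}. First I would settle the regularity: although $u$ is only assumed to lie in $W^{2,n}_{\loc}(\Omega)\cap C(\bar{\Omega})$, the identity $\cA u=\lambda u$ shows that $\cA u$ has a continuous representative on $\Omega$, hence one lying locally in $L^q$ for every $q<\infty$. Applying interior regularity \cite[Theorem 9.16]{gil.tru} to the real and imaginary parts of $u$ — each of which solves an $\cA$-equation with continuous right-hand side — one gets $u\in W^{2,q}_{\loc}(\Omega)$ for all $q<\infty$. In particular, for every ball $B$ centered at $x_0$ with $\overline{B}\subset\Omega$ we have $u\in W^{2,p}(B)$ for some $p>n$ and $\cA u\in C(B)$, so Lemma \ref{lem3.2} is applicable on $B$.

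Next I would run the main argument. Fix such a ball $B$. Since $|u(x)|\le|u(x_0)|$ for all $x\in\Omega$, in particular for all $x\in B$, Lemma \ref{lem3.2} gives
$$
\Re\bigl[\,\overline{u(x_0)}\,(\cA u)(x_0)\,\bigr]\le 0 .
$$
But $\cA u=\lambda u$, so $\overline{u(x_0)}\,(\cA u)(x_0)=\lambda\,|u(x_0)|^2$, and the inequality reads $(\Re\lambda)\,|u(x_0)|^2\le 0$. As $\Re\lambda>0$, this forces $u(x_0)=0$. Together with $|u(x)|\le|u(x_0)|=0$ for all $x\in\Omega$ and the continuity of $u$ up to the boundary, we conclude $u\equiv 0$ on $\bar{\Omega}$.

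For the displayed consequence, I would note that $|u|$ attains its maximum over the compact set $\bar{\Omega}$ at some point $x_1$. If $x_1\in\Omega$, then the hypothesis of the first part holds with $x_0=x_1$ (because $|u(x)|\le|u(x_1)|$ for all $x\in\bar{\Omega}$, a fortiori for all $x\in\Omega$), so $u\equiv 0$ and both maxima vanish. If instead $x_1\in\partial\Omega$, then
$$
\max_{\bar{\Omega}}|u| = |u(x_1)| \le \max_{\partial\Omega}|u| \le \max_{\bar{\Omega}}|u| ,
$$
whence equality. In either case $\max_{\bar{\Omega}}|u|=\max_{\partial\Omega}|u|$.

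The real work has already been done in Lemma \ref{lem3.2}, where the ellipticity inequalities $\Re\sum_{i,j}a_{ij}\partial_iu\,\overline{\partial_ju}\ge 0$ were used to convert a differential inequality for $|u|^2$ into a contradiction via Aleksandrov's maximum principle; the present proposition is then a two-line deduction from it. The only point requiring mild attention is the regularity bootstrap needed to land in $W^{2,p}_{\loc}$ with $p>n$ so that Lemma \ref{lem3.2} is applicable — everything after that is bookkeeping.
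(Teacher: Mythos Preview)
Your proof is correct and follows essentially the same approach as the paper: apply Lemma~\ref{lem3.2} at the interior maximum $x_0$, substitute $\cA u=\lambda u$, and conclude $(\Re\lambda)|u(x_0)|^2\le 0$, hence $u(x_0)=0$ and $u\equiv 0$. You are in fact more careful than the paper, which invokes Lemma~\ref{lem3.2} without commenting on the passage from $W^{2,n}_{\loc}$ to $W^{2,p}_{\loc}$ with $p>n$; your regularity bootstrap via \cite[Theorem~9.16]{gil.tru} fills exactly that gap.
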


\begin{proof}
If $|u(x)| \le |u(x_0)|$ for all $x \in \Omega$, then by Lemma \ref{lem3.2},
$\Re \left[\overline{u(x_0)}(\cA u)(x_0) \right] \le 0$. Since $\lambda u = \cA
u$, it follows that
$$
\Re \lambda |u(x_0)|^2 = \Re \left[\overline{u(x_0)}(\cA u)(x_0) \right] \le 0
\ .
$$
Hence $u(x_0)=0$.
\end{proof}

Next, recall that an operator $B$ on a real Banach space $X$ is called
$m$\textit{-dissipative} if $\lambda-B$ is invertible and
$$
\lambda \|(\lambda-B)^{-1}\|\le 1 \ \mbox{ for all } \ \lambda > 0 \ .
$$

Now we show that the operator $A_c$ is $m$-dissipative and that the resolvent
is positive (i.e., maps non-negative functions to non-negative functions).

\begin{proposition}\label{prop3.5}
Assume that $\Omega$ is $\cA$-regular. Then $A_c$ is $m$-dissipative and
$(\lambda-A_c)^{-1} \ge 0$ for $\lambda > 0$.
\end{proposition}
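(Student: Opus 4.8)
I would prove Proposition \ref{prop3.5} in two stages: first establish the resolvent estimate $\lambda\|(\lambda - A_c)^{-1}\| \le 1$ together with positivity on the resolvent set, and then separately verify that $\lambda - A_c$ is actually invertible for all $\lambda > 0$. For the estimate, let $\lambda > 0$, let $f \in C(\bar\Omega)$, and suppose $u \in D(A_c)$ solves $\lambda u - \cA u = f$; I want $\lambda \|u\|_\infty \le \|f\|_\infty$. The function $u$ is real-valued here, so I would run the classical (real) maximum-principle argument: since $u \in C(\bar\Omega)$ attains a maximum, either it is attained on $\partial\Omega$, where $u = 0$, or at an interior point $x_0$ where $u(x_0) > 0$. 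At such a point, applying the pointwise version of Lemma \ref{lem3.2} (with $\lambda$ replaced by the real situation, i.e. essentially the statement that $\overline{u(x_0)}(\cA u)(x_0) \le 0$ when $|u|$ is maximized at $x_0$, which for real $u \ge 0$ reads $(\cA u)(x_0) \le 0$) gives $\lambda u(x_0) = (\cA u)(x_0) + f(x_0) \le f(x_0) \le \|f\|_\infty$. A symmetric argument applied to $-u$ controls the negative part, so $\lambda \|u\|_\infty \le \|f\|_\infty$. Strictly speaking, to invoke Lemma \ref{lem3.2} I should localize to a small ball around $x_0$ where $u \in W^{2,p}$ for $p > n$ (which holds since $D(A_c) \subseteq \bigcap_{q>1} W^{2,q}_{\loc}$), and use Aleksandrov's maximum principle (Theorem \ref{thmA1}) on that ball exactly as in the proof of Lemma \ref{lem3.2}; this is the only delicate point, but it is routine given the machinery already assembled.

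For positivity, suppose $f \ge 0$ and $\lambda u - \cA u = f$; I claim $u \ge 0$. If not, $u$ attains a strictly negative minimum at some interior $x_0$ (it cannot be attained on $\partial\Omega$ where $u = 0$). At $x_0$, the function $-u$ has a positive maximum, so by the same localized argument $(\cA u)(x_0) \ge 0$, hence $\lambda u(x_0) = (\cA u)(x_0) + f(x_0) \ge 0$, contradicting $u(x_0) < 0$. Thus $(\lambda - A_c)^{-1} \ge 0$ on the resolvent set.

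It remains to show $\lambda - A_c$ is invertible for every $\lambda > 0$. Here I would appeal to the results of Section 2: since $\Omega$ is $\cA$-regular, Theorem \ref{thm2.2} gives that the realization $A$ of $\cA$ in $L^n(\Omega)$ is invertible, and Proposition \ref{prop2.1} shows that then $A - \mu$ is invertible for all $\mu \ge 0$; since $A - \lambda$ has the same structural form as $\cA$ (only $c$ is replaced by $c - \lambda \le 0$), the corresponding operator in $L^n$ is invertible. I then need to transfer this to $C(\bar\Omega)$: given $f \in C(\bar\Omega) \subseteq L^n(\Omega)$, the solution $u \in D(A) = \{v \in C_0(\Omega) \cap W^{2,n}_{\loc}(\Omega) : \cA v \in L^n(\Omega)\}$ of $\lambda u - \cA u = f$ automatically satisfies $\cA u = \lambda u - f \in C(\bar\Omega)$, so $u \in D(A_c)$ and $(\lambda - A_c)^{-1} f = u$. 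Combined with the norm estimate $\lambda\|(\lambda - A_c)^{-1}\| \le 1$ just proved, this shows $A_c$ is $m$-dissipative. The main obstacle, as noted, is the careful local application of Lemma \ref{lem3.2} / Aleksandrov's principle at an interior extremum; everything else is bookkeeping built on Section 1 and Section 2.
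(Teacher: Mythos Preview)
Your proposal is correct and follows essentially the same route as the paper. The paper orders the steps slightly differently---it first notes bijectivity of $\lambda-A_c$ (via Theorem~\ref{thm2.2}), then proves positivity, and finally derives the contraction estimate for general $f$ from positivity via $|(\lambda-A_c)^{-1}f|\le(\lambda-A_c)^{-1}|f|$ rather than by your direct symmetric argument on $\pm u$---but the substance (Lemma~\ref{lem3.2} at an interior extremum, plus surjectivity from Section~\ref{secDirichlet}) is identical.
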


\begin{proof}
Let $\lambda > 0$. Since by Theorem \ref{thm2.2} the operator $(\lambda-A)$ is
bijective, also $(\lambda-A_c)$ is bijective.\\
a) We show that $(\lambda-A_c)^{-1} \ge 0$.
Let $f\in C(\bar{\Omega}),f \le 0, u:=(\lambda-A_c)^{-1}f$.
Assume that $u^+ \neq 0$. Since $u\in C_0(\Omega)$, there exists $x_0 \in
\Omega$  such that $u(x_0)=\max\limits_\Omega u > 0$. Then by Lemma
\ref{lem3.2}, $\cA u(x_0)\le 0$. Since $\lambda u-\cA u =f$, it follows that
$\lambda u(x_0)\le f(x_0) \le 0$ a contradiction.\\
b) Let $f\in C(\bar{\Omega}),u=(\lambda-A_c)^{-1}f$. We show that $\|\lambda
u\|_{C(\bar{\Omega})} \le \|f\|_{C(\bar{\Omega})}$.
Assume  first that $f \ge 0, f \neq 0$. Then $u\ge 0$ by a) and $u\neq 0$. Let
$x_0 \in \Omega$ such that $u(x_0)=\|u\|_{C(\bar{\Omega})}$.
Then $(A_c u)(x_0) \le 0$ by Lemma \ref{lem3.2}. Hence $\lambda u(x_0) \le
\lambda u(x_0)-(A_c u)(x_0) = f(x_0) \le \|f\|_{C(\bar{\Omega})}$.\\
If $f\in C(\bar{\Omega})$ is  arbitrary, then by a) $|(\lambda-A_c)^{-1}f| \le
(\lambda-A_c)^{-1}|f|$ and so $\|\lambda
(\lambda-A_c)^{-1}f\|_{C(\bar{\Omega})} \le \|f\|_{C(\bar{\Omega})}$.
\end{proof}

Now we consider the complex extension of $A_c$ (still denoted by $A_c$) to the
space of all complex-valued functions on $\bar{\Omega}$ which we still denote
by $C(\bar{\Omega})$. Our aim is to prove that for $\Re \lambda > 0$ the
operator $(\lambda-A_c)^{-1}$ is invertible and
$$
\|(\lambda-A_c)^{-1}\| \le \frac{M}{|\lambda |}  \ ,
$$
where $M$ is a constant.
For that, we extend the coefficents $a_{ij}$ to uniformly continuous bounded
real-valued functions on $\R^n$ satisfying the strict ellipticity condition
$$
\Re\sum\limits^d_{i,j=1}a_{ij}(x)\xi_i\bar{\xi}_j \ge \frac{\Lambda}{2}|\xi|^2
$$
$(\xi \in \R^n,x\in \R^n)$, keeping the some notation, see Lemma \ref{lem1.3}a.
We extend $b_j,c$ to bounded measurable functions
on $\R^n$ such that $c\le 0$ (keeping the same notation).
Now we define the operator $B_\infty$ on $L^\infty(\R^n)$ by
$$
\begin{array}{llcl}
&D(B_\infty)&:=&\{u\in \bigcap\limits_{p>1} W^{2,p}_{\loc}(\R^n):u,\cB u \in
L^\infty(\R^n)\}\\
\mbox{where }\qquad & B_\infty u&:=&\cB u \ , \\
& B_\infty u&:=&
\sum\limits^d_{i,j=1}a_{ij}\partial_{ij}u+\sum\limits^d_{j=1}b_j\partial_j u+cu
\mbox{ for } u \in W^{2,p}_{\loc}(\R^n) \ .
\end{array}
$$
The operator $B_\infty$ is   sectorial. This is proved in \cite[Theorem
3.1.7]{Lu} under the assumption that the coefficients $b_j,c$ are  uniformly
continuous. We give a perturbation argument to deduce the general case from the
case $b_j=c=0$. The following lemma shows in particular that the domain of
$B_\infty$ is independent of $b_j$ and $c$.

\begin{lemma}\label{lem3.6}
One has $D(B_\infty)\subset W^{1,\infty}(\R^n)$.  Moreover, for each
$\varepsilon > 0$ there exists $c_\varepsilon \ge 0$ such that
$$
\|u\|_{W^{1,\infty}(\R^n)} \le \varepsilon \|B_\infty
u\|_{L^\infty(\R^n)}+c_\varepsilon \|u\|_{L^\infty(\R^n)}
$$
for all $u\in D(B_\infty)$.
\end{lemma}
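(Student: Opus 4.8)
The plan is to prove Lemma \ref{lem3.6} by reducing the general case to the case $b_j = c = 0$ via a classical interior/boundary-free estimate for $\cB$ on all of $\R^n$, combined with an interpolation-inequality argument. The starting point is the Calderon--Zygmund estimate (Theorem \ref{thmA2}): for $u \in W^{2,p}_{\loc}(\R^n)$ with $\cB u \in L^\infty$, on any ball $B_\varrho(x)$ one has
$$
\|u\|_{W^{2,p}(B_\varrho(x))} \le c\bigl(\|u\|_{L^p(B_{2\varrho}(x))} + \|\cB u\|_{L^p(B_{2\varrho}(x))}\bigr),
$$
with $c$ independent of $x$ because the modulus of continuity of the $a_{ij}$ is uniform on $\R^n$. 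Fixing $\varrho = 1$ and $p > n$, Sobolev embedding $W^{2,p}(B_1) \hookrightarrow C^1(\overline{B_1})$ (again with $x$-independent constant) upgrades this to
$$
\|u\|_{C^1(\overline{B_1(x)})} \le c'\bigl(\|u\|_{L^\infty(B_2(x))} + \|\cB u\|_{L^\infty(B_2(x))}\bigr)
\le c'\bigl(\|u\|_{L^\infty(\R^n)} + \|\cB u\|_{L^\infty(\R^n)}\bigr).
$$
Taking the supremum over $x \in \R^n$ gives $D(B_\infty) \subset W^{1,\infty}(\R^n)$ together with the crude bound $\|u\|_{W^{1,\infty}} \le c'(\|B_\infty u\|_{L^\infty} + \|u\|_{L^\infty})$.

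To get the $\varepsilon$-version, I would rescale. For $\lambda > 0$ set $u_\lambda(x) := u(x/\lambda)$; then $u_\lambda \in W^{2,p}_{\loc}(\R^n)$ and a direct computation shows $\cB_\lambda u_\lambda = \lambda^{-2}(\cB u)(\cdot/\lambda)$ where $\cB_\lambda$ is the operator with coefficients $a_{ij}(\cdot/\lambda)$, $\lambda^{-1}b_j(\cdot/\lambda)$, $\lambda^{-2}c(\cdot/\lambda)$. The rescaled second-order coefficients have modulus of continuity dominated by that of the original ones (the dilation only shrinks oscillation), so the constant $c'$ above is uniform in $\lambda \ge 1$; moreover $\|\partial_j u_\lambda\|_{L^\infty} = \lambda^{-1}\|\partial_j u\|_{L^\infty}$ and $\|u_\lambda\|_{L^\infty} = \|u\|_{L^\infty}$. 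Applying the crude estimate to $u_\lambda$ and unwinding the scaling yields, for all $\lambda \ge 1$,
$$
\|\nabla u\|_{L^\infty(\R^n)} \le c' \lambda\bigl(\lambda^{-2}\|\cB u\|_{L^\infty} + \|u\|_{L^\infty}\bigr)
= \frac{c'}{\lambda}\|\cB u\|_{L^\infty} + c'\lambda\|u\|_{L^\infty}.
$$
Given $\varepsilon > 0$, choose $\lambda = \max\{1, c'/\varepsilon\}$ to absorb the first term, and fold $\|u\|_{L^\infty}$ itself (estimated trivially) into the right-hand side; this gives the claimed interpolation inequality for $\|u\|_{W^{1,\infty}}$ with $c_\varepsilon = c'\lambda + 1$. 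An honest alternative to rescaling is to apply the Calderon--Zygmund/Sobolev estimate on balls of radius $\varrho$ and track the $\varrho$-dependence of the constants directly, which produces the same powers $\varrho^{-2}$ and $\varrho^{0}$; either route works.

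The main obstacle is bookkeeping the uniformity of constants: one must be careful that the Calderon--Zygmund constant and the Sobolev constant do not degenerate under translation (handled by the uniform continuity of the extended $a_{ij}$ and translation-invariance of $\R^n$) nor under the dilation used in the interpolation step (handled because dilation does not increase the modulus of continuity, and the ellipticity constant $\Lambda/2$ is scale-invariant). Once these uniformities are in place, the argument is essentially a one-parameter scaling of the basic elliptic estimate and requires no hypothesis on $b_j, c$ beyond boundedness — which is precisely what makes the domain of $B_\infty$ independent of the lower-order coefficients and sets up the subsequent perturbation argument deducing sectoriality of $B_\infty$ from the case $b_j = c = 0$.
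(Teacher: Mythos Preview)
Your proof is correct but takes a different route from the paper's. Both arguments begin the same way: apply the interior Calderon--Zygmund estimate on a unit ball $B_1$ together with the Sobolev embedding $W^{2,p}(B_1)\hookrightarrow C^1(\overline{B_1})$ for $p>n$, then take the supremum over all translates to pass from balls to $\R^n$. The difference lies in how the small constant $\varepsilon$ is produced. The paper invokes the \emph{compactness} of the embedding $W^{2,p}(B_1)\hookrightarrow C^1(\overline{B_1})$, which (via an Ehrling-type interpolation) yields directly $\|u\|_{C^1(\overline{B_1})}\le \varepsilon\|u\|_{W^{2,p}(B_1)}+c'_\varepsilon\|u\|_{L^\infty(B_1)}$; one then bounds $\|u\|_{W^{2,p}(B_1)}$ by Calderon--Zygmund and is done. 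You instead first derive the crude bound without $\varepsilon$ and then recover the small constant by the dilation $u\mapsto u(\cdot/\lambda)$, using that the Calderon--Zygmund constant is uniform over the rescaled operators $\cB_\lambda$ for $\lambda\ge 1$. Your scaling argument is more quantitative (it gives $c_\varepsilon$ of order $\varepsilon^{-1}$) but requires the extra bookkeeping you flag---uniformity of the constants under dilation of the coefficients---whereas the paper's compactness argument is shorter and needs no auxiliary family of operators, at the price of a non-constructive $c_\varepsilon$.
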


\begin{proof}
Consider an arbitrary ball $B_1$ in $\R^n$ of radius 1 and the corresponding
ball $B_2$ of radius 2. Let $p>n$. Since the injection of $W^{2,p}(B_1)$ into
$C^1(\bar{B_1})$ is compact, for each $\varepsilon > 0$ there exists
$c^{\prime}_\varepsilon > 0$ such that
$$
\|u\|_{C^1(\bar{B}_1)} \le \varepsilon
\|u\|_{W^{2,p}(B_1)}+c^{\prime}_\varepsilon
\|u\|_{L^\infty(B_1)} \ .
$$
By the Calderon-Zygmund estimate this implies that
\begin{eqnarray*}
\|u\|_{C^1(\bar{B}_1)}&\le& \varepsilon c_1 (\|B_\infty
u\|_{L^\infty(B_2)}+\|u\|_{L^\infty(B_2)})\\
&&+ c^\prime_\varepsilon \|u\|_{L^\infty(B_1)}\\
&\le&\varepsilon c_1\|B_\infty u\|_{L^\infty(\R^n)}+(\varepsilon
c_1+c^\prime_\varepsilon) \cdot \|u\|_{L^\infty(\R^n)} \ .
\end{eqnarray*}
Since $\|u\|_{L^\infty(\R^n)} = \sup\limits_{B_1} \|u\|_{L^\infty(B_1)}$, where
the supremum is taken over all balls of radius $1$ in $\R^n$, the claim
follows.
\end{proof}

\begin{theorem}\label{thm3.7}
There exist $M\ge 0, \omega \in \R$   such that $(\lambda-B_\infty)$ is
invertible and
$$
\|\lambda(\lambda-B_\infty)^{-1}\| \le M \quad (\Re \lambda > \omega) \ .
$$
\end{theorem}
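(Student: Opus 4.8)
The plan is to treat $B_\infty$ as a bounded perturbation of the operator $B_\infty^0$ obtained by setting $b_j=c=0$, for which sectoriality is available from \cite[Theorem 3.1.7]{Lu}. Write $\cB = \cB^0 + \cR$ where $\cB^0 u = \sum_{i,j}a_{ij}\partial_{ij}u$ and $\cR u = \sum_j b_j\partial_j u + cu$. The key point is Lemma \ref{lem3.6}: since $D(B_\infty)=D(B_\infty^0)$ (the domain is characterized purely by $u,\cB^0 u\in L^\infty$, and $\cB u - \cB^0 u = \cR u$ is automatically in $L^\infty$ once $u\in W^{1,\infty}$), and since for every $\varepsilon>0$
$$
\|\cR u\|_{L^\infty(\R^n)} \le C\bigl(\|u\|_{W^{1,\infty}(\R^n)}\bigr) \le \varepsilon\,\|B_\infty^0 u\|_{L^\infty(\R^n)} + c_\varepsilon\,\|u\|_{L^\infty(\R^n)},
$$
the perturbation $\cR:D(B_\infty^0)\to L^\infty(\R^n)$ is $B_\infty^0$-bounded with arbitrarily small relative bound. (Here I apply Lemma \ref{lem3.6} to $B_\infty^0$ in place of $B_\infty$; the lemma's proof only used the Calderon-Zygmund estimate for the second-order part and so applies verbatim with $b_j=c=0$.)

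First I would recall the standard perturbation theorem for sectorial operators: if $B^0$ generates a bounded holomorphic semigroup (or is sectorial of some angle on a sector $\{\Re\lambda>\omega_0\}$) and $\cR$ is $B^0$-bounded with relative bound $a$ sufficiently small, then $B^0+\cR$ is again sectorial on a half-plane $\{\Re\lambda>\omega\}$, with a resolvent bound $\|\lambda(\lambda-B^0-\cR)^{-1}\|\le M$ there. The mechanism is the Neumann series: for $\Re\lambda$ large, $(\lambda-B^0-\cR)^{-1} = (\lambda-B^0)^{-1}\sum_{k\ge 0}\bigl(\cR(\lambda-B^0)^{-1}\bigr)^k$, and one estimates $\|\cR(\lambda-B^0)^{-1}\|\le a\|B^0(\lambda-B^0)^{-1}\| + c_\varepsilon\|(\lambda-B^0)^{-1}\| \le a(1+M_0) + c_\varepsilon M_0/|\lambda|$, which is $<1$ for $a$ small and $|\lambda|$ (equivalently $\Re\lambda$) large. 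This yields invertibility of $\lambda-B_\infty$ on a half-plane together with the bound $\|\lambda(\lambda-B_\infty)^{-1}\|\le M$.

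The one technical point requiring care is matching the form of the conclusion in \cite[Theorem 3.1.7]{Lu} with what I need here. Lunardi's result gives sectoriality in a sector with vertex at some point, i.e.\ resolvent estimates $\|(\lambda-B_\infty^0)^{-1}\|\le M_0/|\lambda-\omega_0|$ on $\omega_0 + \Sigma_{\theta}$ for some $\theta>\pi/2$; I would simply restrict to the half-plane $\{\Re\lambda>\omega_0\}\subset \omega_0+\Sigma_\theta$, where $|\lambda-\omega_0|\ge c|\lambda|$ once $\Re\lambda>2|\omega_0|$, say, so that $\|B_\infty^0(\lambda-B_\infty^0)^{-1}\| = \|{-}I + \lambda(\lambda-B_\infty^0)^{-1}\|$ stays bounded. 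Everything else is routine.

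I expect the main obstacle — really the only substantive step — to be the verification that Lemma \ref{lem3.6} applies to $B_\infty^0$ and hence delivers the small-relative-bound estimate for $\cR$; this is where the regularity theory (compact embedding $W^{2,p}\hookrightarrow C^1$ plus interior Calderon-Zygmund) does the work, and it is precisely what the authors isolated in Lemma \ref{lem3.6}. Once that is in hand, the generation statement for $B_\infty$ is a formal consequence of the perturbation lemma for sectorial operators.
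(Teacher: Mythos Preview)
Your proposal is correct and follows essentially the same route as the paper: define $B_\infty^0$ by setting $b_j=c=0$, invoke \cite[Theorem 3.1.7]{Lu} for its sectoriality, apply Lemma~\ref{lem3.6} to $B_\infty^0$ to see that $B_\infty-B_\infty^0$ is $B_\infty^0$-bounded with arbitrarily small relative bound, and conclude via the standard holomorphic perturbation theorem (the paper cites \cite[Theorem 3.7.23]{ABHN01} rather than spelling out the Neumann series). Your extra discussion of the sector-matching and the Neumann series is sound but more detailed than what the paper records.
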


\begin{proof}
Denote by $B^0_\infty$ the operator with the coefficients $b_j,c$ replaced by
$0$. Lemma \ref{lem3.6} implies that $D(B^0_\infty)=D(B_\infty)$ and (applied
to $B^0_\infty)$ that
$$
\|(B_\infty-B^0_\infty)u\|_{L^\infty(\R^n)}\le \varepsilon \|B^0_\infty
u\|_{L^\infty(\R^n)} +c^\prime_\varepsilon \|u\|_{L^\infty(\R^n)}
$$
for all $u\in D(B_\infty^0),\varepsilon > 0$ and  some $c^\prime_\varepsilon \ge
0$.
Since $B^0_\infty$ is sectorial by \cite[Theorem 3.1.7]{Lu} the claim
follows from the usual holomorphic perturbation result \cite[Theorem
3.7.23]{ABHN01}.
\end{proof}

Now we use the maximum principle, Lemma \ref{lem3.2}, to carry over the
sectorial estimate from $\R^n$ to $\Omega$. This is done in
a very abstract framework by Lumer-Paquet \cite{LP}, see \cite[Section 2.5]{Ar}
for the
Laplacian.\\

\noindent
\textbf{Proof of  Theorem \ref{thm3.1}.}
Let $\omega$ be the constant from Theorem \ref{thm3.7} and let $\Re \lambda >
\omega, f \in C(\bar{\Omega}),u=(\lambda-A_c)^{-1}f$. Then
$$
u\in C_0(\Omega)\cap \bigcap\limits_{p>1} W^{2,p}_{\loc} (\Omega) \ \mbox{ and }
\ \lambda u -\cA u = f \ .
$$
Extend $f$ by $0$ to $\R^n$ and let $v=(\lambda-B_\infty)^{-1} f$.
Then $\lambda v-\cA v=f$ on $\Omega$ and $\|\lambda v\|_{L^\infty(\Omega)}\le
M\|f\|_{C(\bar{\Omega})}$ by Theorem \ref{thm3.7}. Moreover, $w:=v-u \in
C(\bar{\Omega})\cap
\bigcap\limits_{p\ge 1} W^{2,p}_{\loc}(\Omega),\lambda w -\cA w = 0$
on $\Omega$ and $w(z)=v(z)$ for all $z \in \partial \Omega$. Then by the
complex maximum principle Proposition \ref{prop3.3},
$$
\|w\|_{C(\bar{\Omega})}=\max\limits_{z\in\partial \Omega} |v(z)| \le
\frac{M}{|\lambda |} \|f\|_{C(\bar{\Omega})}  \ .
$$
Consequently,
\begin{eqnarray*}
\|u\|_{C(\bar{\Omega})}&=& \|u-v+v\|_{C(\bar{\Omega})}\\
&\le&\|w\|_{C(\bar{\Omega})} + \|v\|_{C(\bar{\Omega})}\\
&\le& \frac{2M}{|\lambda |}\|f\|_{C(\bar{\Omega})} \ .
\end{eqnarray*}
This is the desired estimate which shows that $A_c$ is sectorial.
By \cite[Proposition 2.1.11]{Lu} there exist a sector $\Sigma_\theta +
\omega :=\{\omega + re^{i\alpha}:r>0,|\alpha| < \theta\}$ with $\theta \in
(\frac{\pi}{2},\pi)$, $\omega \ge 0$, and a constant $M_1 > 0$ such that
$$
(\lambda-A_c)^{-1} \ \mbox{ exists for } \ \lambda \in \Sigma_\theta + \omega \
\mbox{ and } \ \|\lambda(\lambda- A_c)^{-1}\| \le M_1 \ .
$$
Thus there exists $r>0$ such that $(\lambda-A_c)$ is invertible and
$\|\lambda(\lambda-A_c)^{-1}\|\le M$ whenever $\Re \lambda > 0$ and
$|\lambda|>r$.
Since $A$ is invertible by Theorem \ref{thm2.2}, it follows that $A_c$ is
bijective. Since the resolvent set of $A_c$ is nonempty, $A_c$ is  closed. Thus
$A_c$ is invertible. Since by Proposition \ref{prop3.5} $A_c$ is resolvent
positive, it follows from \cite[Proposition 3.11.2]{ABHN01} that there exists
$\varepsilon > 0$ such that $(\lambda-A_c)$ is invertible whenever $\Re \lambda
> - \varepsilon$. As a consequence,
$$
\sup\limits_{|\lambda|\le r \atop \Re \lambda > 0}
\|\lambda(\lambda-A_c)^{-1}\| < \infty \ .
$$
Together with the previous estimates this implies that
$$
\| \lambda (\lambda-A_c)^{-1}\| \le M_2
$$
whenever $\Re \lambda > 0$ for some constant $M_2$. Thus $A_c$ generates a
bounded
holomorphic semigroup $T$ on $C(\bar{\Omega})$.
Since $D(A_c)\subset C_0(\Omega)$ and $\cD(\Omega) \subset D(A_c)$ it follows
that $\overline{D(A_c)}=C_0(\Omega)$. The  part of $A_c$ in $C_0(\Omega)$ is
$A_0$. So it follows from \cite[Remark 2.1.5, Proposition 2.1.4]{Lu} that
$A_0$ generates a bounded, holomorphic $C_0$-semigroup $T_0$ on $C_0(\Omega)$ 
and $T_0(t)=T(t)_{|_{C_0(\Omega)}}$ on $C_0(\Omega)$.
\qed\\

Finally we  mention compactness and strict positivity.

\begin{proposition}\label{prop3.8}
Assume that $\Omega$ satisfies the uniform exterior cone condition. Then
$(\lambda-A_c)^{-1}$ and $T(t)$ are compact operators $(\lambda>0,t>0)$.
\end{proposition}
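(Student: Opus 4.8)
The plan is to establish compactness of $(\lambda-A_c)^{-1}$ directly, using a global H\"older estimate valid under the uniform exterior cone condition, and then to deduce compactness of $T(t)$ from the holomorphy of the semigroup.

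\medskip

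For the resolvent, fix $\lambda>0$ and let $(f_k)_{k\in\N}$ be a bounded sequence in $C(\bar\Omega)$; put $u_k:=(\lambda-A_c)^{-1}f_k\in D(A_c)\subset C_0(\Omega)\cap W^{2,n}_{\loc}(\Omega)$. Then $u_k$ solves the Poisson problem $-(\cA-\lambda)u_k=f_k$ with $u_k|_{\partial\Omega}=0$, and $\cA-\lambda$ is again an operator of exactly the type considered throughout (its zero-order coefficient $c-\lambda$ is still $\le 0$, and ellipticity and continuity of the $a_{ij}$ are unchanged). By Aleksandrov's maximum principle (Theorem \ref{thmA1}) the sequence $(u_k)$ is bounded in $C(\bar\Omega)$, hence also in $L^n(\Omega)$ since $\Omega$ is bounded. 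Now the uniform exterior cone condition enters: by the global H\"older estimate (Theorem \ref{thmA3}) there are $\alpha\in(0,1)$ and a constant $c$, independent of $k$, with
$$
\|u_k\|_{C^\alpha(\Omega)}\le c\bigl(\|f_k\|_{L^n(\Omega)}+\|u_k\|_{L^n(\Omega)}\bigr),
$$
so the family $(u_k)$ is uniformly bounded and equicontinuous on $\bar\Omega$. By the Arzel\`a--Ascoli theorem a subsequence of $(u_k)$ converges in $C(\bar\Omega)$. This shows that $(\lambda-A_c)^{-1}$ is compact for every $\lambda>0$.

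\medskip

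It remains to pass from the resolvent to $T(t)$. Here I would invoke the standard fact for holomorphic semigroups: by Theorem \ref{thm3.1}, for $t>0$ one has $T(t)C(\bar\Omega)\subseteq D(A_c)$ and $A_cT(t)\in\cL(C(\bar\Omega))$, whence
$$
T(t)=(\lambda-A_c)^{-1}(\lambda-A_c)T(t)=(\lambda-A_c)^{-1}\bigl(\lambda T(t)-A_cT(t)\bigr)
$$
exhibits $T(t)$ as the composition of the compact operator $(\lambda-A_c)^{-1}$ with a bounded operator on $C(\bar\Omega)$; such a composition is compact, so $T(t)$ is compact for all $t>0$.

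\medskip

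The only genuinely non-routine ingredient is the up-to-the-boundary H\"older bound used above: the interior Calderon--Zygmund estimate alone would give compactness of $(u_k)$ only on compact subsets of $\Omega$, leaving no control near $\partial\Omega$. It is precisely the uniform exterior cone condition that furnishes a barrier at each boundary point and hence the global H\"older regularity of solutions of the Dirichlet problem (Theorem \ref{thmA3}); this is why compactness is asserted under the cone condition rather than under mere $\cA$-regularity.
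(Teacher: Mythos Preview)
Your proof is correct and follows essentially the same route as the paper's: both rely on the global H\"older estimate of Theorem~\ref{thmA3} (available thanks to the uniform exterior cone condition) to place $D(A_c)$ into $C^\alpha(\Omega)$, use the compact embedding $C^\alpha(\Omega)\hookrightarrow C(\bar\Omega)$ (your Arzel\`a--Ascoli step) to conclude that the resolvent is compact, and then invoke holomorphy to pass to $T(t)$. The paper's version is simply terser --- it asserts $D(A_c)\subset C^\alpha(\Omega)$ and the compact embedding in one line, and cites the standard fact that a holomorphic semigroup with compact resolvent has compact $T(t)$ --- whereas you spell out the sequence argument and the factorization $T(t)=(\lambda-A_c)^{-1}\bigl(\lambda T(t)-A_cT(t)\bigr)$ explicitly.
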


\begin{proof}
It follows from Theorem \ref{thmA3} that $D(A_c)\subset C^\alpha(\Omega)$. Since
the
embedding of $C^\alpha(\Omega)$ into $C(\bar{\Omega})$ is compact, it follows
that the resolvent of $A_c$ is compact. Since $T$ is holomorphic, it follows
that $T(t)$ is compact for all $t>0$.
\end{proof}

\begin{proposition}\label{prop3.9}
Assume that $\Omega$ is $\cA$-regular. Let $t>0,0\le f \in C_0(\Omega),f
\not\equiv 0$. Then $(T_0(t)f)(x)>0$ for all $x \in \Omega$.
\end{proposition}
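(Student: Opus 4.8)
The plan is to realize $u(t,x):=(T_0(t)f)(x)$ as a nonnegative strong solution of the parabolic equation $u_t=\cA u$ on $(0,\infty)\times\Omega$ and to apply the parabolic strong maximum principle. (We assume here that $\Omega$ is connected; for a general bounded open set one runs the argument on each connected component, the conclusion then reading ``$T_0(t)f>0$ on every component on which $f$ does not vanish identically''.) We argue by contradiction: suppose $(T_0(t_0)f)(x_0)=0$ for some $t_0>0$ and some $x_0\in\Omega$.

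First I would record the regularity of $u$. By Theorem \ref{thm3.1}, $T_0$ is a bounded holomorphic $C_0$-semigroup, so for every $s>0$ we have $T_0(s)f\in D(A_0)\subset\bigcap_{q>1}W^{2,q}_{\loc}(\Omega)$, the map $s\mapsto T_0(s)f$ is continuously differentiable from $(0,\infty)$ into $C_0(\Omega)$ with $\frac{d}{ds}T_0(s)f=A_0T_0(s)f=\cA\big(T_0(s)f\big)\in C_0(\Omega)$, and, since the interior Calderon--Zygmund estimate (Theorem \ref{thmA2}) embeds $D(A_0)$ continuously into $W^{2,q}(B_\varrho)$ whenever $\overline{B_{2\varrho}}\subset\Omega$ and $q>1$, the map $s\mapsto T_0(s)f$ is also continuous from $(0,\infty)$ into $W^{2,q}_{\loc}(\Omega)$ for each such $q$. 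Consequently $u$ lies in the parabolic Sobolev space $W^{2,1}_{q,\loc}\big((0,\infty)\times\Omega\big)$ for every $q>1$ and satisfies $u_s=\cA u$ almost everywhere on $(0,\infty)\times\Omega$; moreover $u\ge 0$ by positivity of $T_0$, which follows from resolvent positivity (Proposition \ref{prop3.5}).

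Next I would apply the strong minimum principle for the operator $\partial_s-\cA$ on the cylinder $(0,t_0]\times\Omega$. Since $a_{ij}\in C(\bar\Omega)$ and $b_j,c\in L^\infty(\Omega)$, the interior parabolic strong maximum principle holds for strong ($W^{2,1}_{q,\loc}$, $q>n+1$) solutions, and the zeroth-order term causes no difficulty because the nonnegative function $u$ attains the value $0$, i.e.\ a nonpositive minimum, at the interior space-time point $(t_0,x_0)$ with $t_0>0$. It follows that $u$ vanishes at every point $(s,y)$ that can be joined to $(t_0,x_0)$ by a curve in the cylinder along which the time coordinate is nondecreasing. Because $\Omega$ is open and connected, this set is all of $(0,t_0]\times\Omega$: from any $(s,y)$ with $0<s\le t_0$ one first moves inside $\{s\}\times\Omega$ from $y$ to $x_0$ and then increases time from $s$ up to $t_0$ at $x_0$. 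Hence $T_0(s)f=0$ for every $0<s\le t_0$, and letting $s\downarrow 0$ and using that $T_0$ is strongly continuous forces $f=0$, contradicting $f\not\equiv 0$. Therefore $(T_0(t)f)(x)\neq 0$, and being nonnegative it is strictly positive, for all $t>0$ and $x\in\Omega$.

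I expect the main obstacle to be the regularity bookkeeping that turns the ``semigroup solution'' $u$ into a genuine interior strong solution of the space-time equation to which the parabolic strong maximum principle for strong solutions applies; this is precisely where holomorphy of $T_0$ and the interior Calderon--Zygmund estimate are used, together with care in identifying the propagation set so that connectedness of $\Omega$ pushes the zero to all of $\Omega$. An alternative route would first use the elliptic strong maximum principle to show that $(\lambda-A_0)^{-1}$ carries every nonzero nonnegative function to a function that is strictly positive throughout $\Omega$, and then propagate strict positivity of $T_0(t)f$ along a chain of overlapping balls covering $\Omega$; this works as well, but it is longer and still rests on the same parabolic ingredient applied on each ball.
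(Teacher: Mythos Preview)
Your argument is correct: the regularity bookkeeping is done carefully (holomorphy of $T_0$ gives $s\mapsto T_0(s)f$ analytic into $D(A_0)$ with the graph norm, and the interior Calderon--Zygmund estimate then yields continuity into $W^{2,q}_{\loc}(\Omega)$, hence $u\in W^{2,1}_{q,\loc}$), and the parabolic strong maximum principle for strong solutions with bounded measurable lower-order coefficients finishes the job. Your explicit remark about connectedness is appropriate; the paper's statement tacitly assumes it as well.

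The paper, however, takes a different and considerably shorter route. It first shows, via the \emph{elliptic} strong maximum principle \cite[Theorem 9.6]{gil.tru}, that $(\lambda-A_0)^{-1}f$ is strictly positive on $\Omega$ whenever $0\le f\not\equiv 0$: if $u:=(\lambda-A_0)^{-1}f$ vanished somewhere, then $v:=-u$ would satisfy $(\cA-\lambda)v=f\ge 0$ and attain the interior nonnegative maximum $0$, forcing $v\equiv 0$ and hence $f\equiv 0$. This says precisely that $T_0$ is a positive \emph{irreducible} $C_0$-semigroup on $C_0(\Omega)$; since $T_0$ is holomorphic, strict positivity of $T_0(t)f$ for all $t>0$ then follows from the abstract result \cite[C-III, Theorem 3.2(b)]{Na}. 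So the paper stays entirely within the elliptic machinery it has already set up and invokes a ready-made operator-theoretic theorem, avoiding any parabolic regularity or parabolic maximum principle. Your approach is more hands-on PDE and avoids the abstract black box, at the price of importing parabolic tools the paper nowhere develops. Incidentally, the ``alternative route'' you sketch at the end is close to the paper's, but the paper does not propagate positivity through a chain of balls; the passage from strict positivity of the resolvent to strict positivity of $T_0(t)f$ is handled by the cited theorem from \cite{Na}.
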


\begin{proof}
a) We show that $u:=(\lambda-A_0)^{-1}f$ is strictly positive. Assume that
$u(x)\le 0$ for some $x\in \Omega$. Let $v=-u$. Then $\cA v-\lambda v=f \ge 0$.
It follows from the maximum principle \cite[Theorem 9.6]{gil.tru} that $v$ is 
constant. Since $v \in C_0(\Omega)$, it follows that $v\equiv 0$. Hence also
$f\equiv 0$.\\
b) It follows from a) that $T_0$ is a positive, irreducible $C_0$-semigroup on
$C_0(\Omega)$. Since the semigroup is holomorphic, the claim follows from
\cite[C-III.Theorem 3.2.(b)]{Na}.
\end{proof}

\begin{appendix}

\section{Results on elliptic partial differential equations}
In this section, we collect some results on elliptic partial differential
equations, which can be found in text books, for example \cite{gil.tru}. We
consider the elliptic operator $\cA$ from the Introduction and assume that the
ellipticity constant $\Lambda >0$ is so small that $\|a_{ij}\|_{L^\infty} \ , \
\|b_j\|_{L^\infty} \ ,   \ \|c\|_{L^\infty} \le \frac{1}{\Lambda}$.\\

\begin{theorem}[Aleksandrov's  maximum principle, \mbox{\cite[Theorem
9.1]{gil.tru}}] \label{thmA1}
Let $f \in L^n(\Omega),u \in C(\bar{\Omega})\cap W^{2,n}_{\loc}(\Omega)$ such
that
$$
-\cA u \le f \ .
$$
Then
$$
\sup\limits_{\Omega} u \le \sup\limits_{\partial \Omega} u^+ +
c_1\|f^+\|_{L^n(\Omega)}
$$
where the constant $c_1$ depends merely on $n,\diam \Omega$ and
$\|b_j\|_{L^n(\Omega)},j=1\ldots,n$.
Consequently, if $u\in C_0(\Omega)$ and $-\cA u =f$, then 
$$
\|u\|_{L^\infty(\Omega)} \le 2c_1\|f\|_{L^n(\Omega))} 
$$
and $u \le 0$ if $f \le 0$.
\end{theorem}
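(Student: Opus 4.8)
The plan is to follow the classical ABP argument as in Gilbarg--Trudinger. The statement is quoted from \cite[Theorem 9.1]{gil.tru}, so in principle one may simply cite it; but to make the appendix self-contained I would sketch the proof as follows. First I would reduce to the case $b_j = c = 0$ by absorbing the lower order terms: writing $-\cA u \le f$ as $-\sum a_{ij}\partial_{ij} u \le f + \sum b_j \partial_j u + cu$ and, using $c \le 0$, on the upper contact set (where $u > 0$ and the graph of $u$ coincides with its concave envelope) one has $\sum b_j \partial_j u + cu \le |b|\,|\nabla u|$, so it suffices to bound $\sup_\Omega u - \sup_{\partial\Omega} u^+$ for the operator $L_0 u = \sum a_{ij}\partial_{ij} u$ with the inhomogeneous term $f + |b|\,|\nabla u|$ in $L^n$.

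The heart of the matter is the \emph{Aleksandrov--Bakelman--Pucci estimate} for $L_0$: if $M = \sup_\Omega u - \sup_{\partial\Omega} u^+ > 0$, consider the upper contact set $\Gamma^+ = \{ y \in \Omega : u(x) \le u(y) + \nabla u(y)\cdot(x-y) \text{ for all } x \in \Omega \}$. The key geometric lemma is that the image of the gradient map $\nabla u(\Gamma^+)$ contains the ball $B_{M/d}(0)$ with $d = \diam\Omega$; this is proved by a supporting-hyperplane argument. Then, since $\nabla u$ is Lipschitz on $\Gamma^+$ (here the hypothesis $u \in W^{2,n}_{\loc}$ enters, together with a standard approximation to justify the change of variables for $W^{2,n}$ functions rather than $C^2$ ones), the area formula gives
\begin{equation*}
|B_{M/d}| \le \int_{\Gamma^+} |\det D^2 u| \dx .
\end{equation*}
On $\Gamma^+$ the Hessian $D^2 u$ is negative semidefinite, and combining $-\sum a_{ij}\partial_{ij} u \le f + |b|\,|\nabla u|$ with the matrix inequality $\det(-D^2 u)^{1/n} \le \frac{1}{n}\operatorname{tr}(A(-D^2 u)) / (\det A)^{1/n}$ (arithmetic--geometric mean applied to the eigenvalues of $A^{1/2}(-D^2 u)A^{1/2}$, using uniform ellipticity to control $\det A$ from below) yields the pointwise bound $|\det D^2 u|^{1/n} \le C_n\big(f^+ + |b|\,|\nabla u|\big)$ on $\Gamma^+$. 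Feeding this into the area-formula inequality and applying H\"older gives
\begin{equation*}
\frac{M}{d} \le C\,\big\| f^+ + |b|\,|\nabla u| \big\|_{L^n(\Gamma^+)} .
\end{equation*}

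Finally I would dispose of the term $\||b|\,|\nabla u|\|_{L^n(\Gamma^+)}$: this is where the exponent $p = n$ is essential. One uses the standard trick of replacing $f$ by $f + \mu$ and running the estimate with the level sets of $u$, or equivalently one notes that $\|\,|b|\,|\nabla u|\,\|_{L^n(\Gamma^+)} \le \|b\|_{L^n(\Omega)} \sup_{\Gamma^+}|\nabla u|$ is not quite good enough and instead iterates the contact-set argument over sublevel sets $\{u > t\}$, integrating in $t$; this produces the clean bound $\sup_\Omega u \le \sup_{\partial\Omega} u^+ + c_1 \|f^+\|_{L^n(\Omega)}$ with $c_1$ depending only on $n$, $\diam\Omega$, and $\|b_j\|_{L^n(\Omega)}$. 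The main obstacle is precisely this last step — the sharp handling of the first-order term so that no norm of $u$ or $\nabla u$ appears on the right-hand side — and the technical point of justifying the contact-set/area-formula machinery for merely $W^{2,n}_{\loc}$ functions rather than $C^2$ ones, which is done by mollification and a limiting argument. The two concluding assertions are immediate: applying the estimate to $u$ and to $-u$ when $u \in C_0(\Omega)$ and $-\cA u = f$ gives $\|u\|_{L^\infty(\Omega)} \le 2c_1\|f\|_{L^n(\Omega)}$, and if $f \le 0$ then $\sup_\Omega u \le \sup_{\partial\Omega} u^+ = 0$.
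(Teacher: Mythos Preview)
The paper does not prove this result: Theorem~\ref{thmA1} is stated in the appendix purely as a quotation of \cite[Theorem~9.1]{gil.tru}, with no argument supplied --- the appendix is explicitly a list of textbook facts. Your sketch therefore goes well beyond what the paper does, and it is a recognisable and essentially correct outline of the Aleksandrov--Bakelman--Pucci proof as carried out in Gilbarg--Trudinger. The one imprecise step is your handling of the drift term $|b|\,|\nabla u|$: the device actually used in \cite{gil.tru} is not an iteration over sublevel sets $\{u>t\}$ but rather the \emph{weighted} normal-mapping estimate (their Lemma~9.3) applied with the specific weight $g(p)=(|p|^n+\mu^n)^{-1}$; the left side then evaluates to $\frac{\omega_n}{n}\log\bigl(1+(M/(d\mu))^n\bigr)$, on the right H\"older's inequality in $\R^2$ separates $(f^++|b|\,|Du|)^n/(\mu^n+|Du|^n)\le\bigl((f^+/\mu)^{n/(n-1)}+|b|^{n/(n-1)}\bigr)^{n-1}$, and the choice $\mu=\|f^+\|_{L^n(\Gamma^+)}$ finishes. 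Your slicing description is vague and would need independent justification. The derivation of the two corollaries at the end is correct.
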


\begin{theorem} [Interior Calderon-Zygmund estimate, \mbox{\cite[Theorem
9.11]{gil.tru}}] \label{thmA2}
Let $B_{2\varrho}$ be a ball of radius $2\varrho$ such that
$\overline{B_{2\varrho}} \subset \Omega$, and let $u\in W^{2,p}(B_{2\varrho})$,
where $1<p
<\infty$. Then
$$
\|u\|_{W^{2,p}(B_{\varrho})} \le c_\varrho (\|\cA
u\|_{L^p(B_{2\varrho})}+\|u\|_{L^p(B_{2\varrho})}) 
$$
where $B_\varrho$ is the ball of radius $\varrho$ concentric with
$B_{2\varrho}$. The constant $c$ merely depends on
$\Lambda,n,\varrho,p$ and the continuity moduli of the $a_{ij}$.
\end{theorem}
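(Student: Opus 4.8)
The plan is to reduce the variable-coefficient estimate to the classical Calder\'on--Zygmund estimate for the Newtonian potential by freezing the leading coefficients, to dispose of the lower-order terms by interpolation, and to pass from a small ball to $B_\varrho$ by a finite covering. Since $u \in W^{2,p}(B_{2\varrho})$ and $a_{ij},b_j,c$ are bounded, $\cA u \in L^p(B_{2\varrho})$, so the right-hand side makes sense.

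First I would treat the model operator $\cA_0 = \sum_{i,j} a_{ij}^0 \partial_{ij}$ with constant coefficients obeying the ellipticity condition. The affine substitution $y = (a^0)^{-1/2}x$ turns $\cA_0$ into the Laplacian, so it suffices to recall that for $w\in \cD(\R^n)$ one has $\|D^2 w\|_{L^p(\R^n)} \le C_{n,p}\|\Delta w\|_{L^p(\R^n)}$: writing $w$ as the convolution of $\Delta w$ with the Newtonian potential, $D^2 w$ becomes a Calder\'on--Zygmund singular integral, bounded on $L^2$ by Plancherel (the multiplier $\xi_i\xi_j|\xi|^{-2}$ is bounded) and of weak type $(1,1)$ via the Calder\'on--Zygmund decomposition; Marcinkiewicz interpolation and duality give all $1<p<\infty$. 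Tracking the change of variables, the resulting constant depends only on $n$, $p$ and $\Lambda$.

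Next I freeze at a point $x_0$ and, with a cutoff $\eta\in\cD(B_r(x_0))$ equal to $1$ on $B_{r/2}(x_0)$, apply the model estimate to $\eta u$ for the operator $\sum_{i,j}a_{ij}(x_0)\partial_{ij}$. The terms where derivatives hit $\eta$ are absorbed into $\|u\|_{W^{1,p}(B_r)}$, and the dangerous term $\sum_{i,j}\bigl(a_{ij}(x)-a_{ij}(x_0)\bigr)\partial_{ij}(\eta u)$ has $L^p(B_r)$-norm at most $\omega(r)\,\|D^2(\eta u)\|_{L^p(B_r)}$, with $\omega$ the common modulus of continuity of the $a_{ij}$; choosing $r=r_0$ with $C_{n,p,\Lambda}\,\omega(r_0)\le\tfrac12$ lets this term be absorbed on the left. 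The first- and zeroth-order parts of $\cA$ are handled at the same time by the interpolation inequality $\|\nabla v\|_{L^p}\le\varepsilon\|D^2v\|_{L^p}+C_\varepsilon\|v\|_{L^p}$. This yields, for every $x_0$,
\[
\|u\|_{W^{2,p}(B_{r_0/2}(x_0))} \le C\bigl(\|\cA u\|_{L^p(B_{r_0}(x_0))}+\|u\|_{L^p(B_{r_0}(x_0))}\bigr),
\]
with $C$ and $r_0$ depending only on $\Lambda,n,p$ and the moduli of continuity of the $a_{ij}$. Finally, cover $\overline{B_\varrho}$ by finitely many balls $B_{r_0/2}(x_k)$ with $B_{r_0}(x_k)\subset B_{2\varrho}$ (shrinking $r_0$ below $\varrho$ first; the number of balls depends on $\varrho,n,r_0$) and sum the local estimates to obtain the claim with $c_\varrho$ depending only on $C$ and that number.

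The main obstacle is the very first step: the $L^p$-boundedness of the second-derivative Calder\'on--Zygmund operator for $p\neq 2$, which is the genuine analytic content (the Calder\'on--Zygmund decomposition together with Marcinkiewicz interpolation). Everything afterwards is freezing coefficients, interpolating away the lower-order terms, and a covering argument; the only delicate bookkeeping is that the smallness radius $r_0$, hence $c_\varrho$, is governed by the modulus of continuity of the leading coefficients rather than merely their sup-norm.
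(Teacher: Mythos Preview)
The paper does not give its own proof of this statement: Theorem~\ref{thmA2} is placed in the appendix as a quotation of \cite[Theorem~9.11]{gil.tru}, with no argument supplied. Your sketch is correct and is precisely the standard route taken in Gilbarg--Trudinger: the constant-coefficient $L^p$ estimate via the Newtonian potential and Calder\'on--Zygmund theory (\cite[Theorem~9.9]{gil.tru}), then freezing the leading coefficients and absorbing the error by choosing the radius small against the modulus of continuity, handling the lower-order terms by interpolation, and finally a covering argument. So there is nothing to compare---your proposal matches the textbook proof the paper invokes.
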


\begin{theorem} [H\"older regularity, \mbox{\cite[Corollary 9.29]{gil.tru}}]
\label{thmA3}
Assume that $\Omega$ satisfies the uniform exterior cone condition. Let
$u \in C_0(\Omega)\cap W^{2,n}_{\loc}(\Omega)$ and $f \in L^n(\Omega)$ such
that $-\cA u=f$. Then $u \in C^\alpha(\Omega)$ and
$$
\|u\|_{C^\alpha(\Omega)} \le C(\|f\|_{L^n(\Omega)}+\|u\|_{L^2(\Omega)})
$$
where $\alpha > 0$ and $c >0$ depend merely on $\Omega,\Lambda$ and $n$. 
\end{theorem}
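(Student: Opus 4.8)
Since Theorem \ref{thmA3} is quoted from the literature, the quickest justification is that it is \cite[Corollary 9.29]{gil.tru} specialised to zero boundary data; below I indicate how I would reconstruct the argument from the ingredients already at hand. The plan is to combine an \emph{interior} H\"older bound, which follows essentially from Theorem \ref{thmA2}, with a \emph{boundary} H\"older bound supplied by the uniform exterior cone, and then to glue the two together by a standard covering argument; only the boundary step is substantial. Throughout, $C^\alpha(\Omega)$ is understood as the space of functions H\"older continuous up to $\partial\Omega$ (this is what the use of Arzel\`a--Ascoli in Theorem \ref{thm1.4} requires).

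\emph{Interior estimate.} Since $u\in W^{2,n}_{\loc}(\Omega)$ and $-\cA u=f\in L^n(\Omega)$, the interior Calder\'on--Zygmund estimate Theorem \ref{thmA2} (with $p=n$) controls $\|u\|_{W^{2,n}(B_\varrho(x))}$ by $\|f\|_{L^n(B_{2\varrho}(x))}+\|u\|_{L^n(B_{2\varrho}(x))}$ whenever $B_{2\varrho}(x)\subset\Omega$. Two Sobolev embeddings on a ball --- first the borderline $W^{2,n}\hookrightarrow W^{1,q}$ for every $q<\infty$, then Morrey's inequality $W^{1,q}\hookrightarrow C^{0,1-n/q}$ for $q>n$ --- then give $u\in C^{0,\beta}_{\loc}(\Omega)$ for \emph{every} $\beta<1$, and rescaling turns this into the scale-invariant bound
$$
[u]_{C^{0,\beta}(\overline{B_\varrho(x)})}\le c_\beta\,\varrho^{1-\beta}\big(\|f\|_{L^n(\Omega)}+\varrho^{-1}\|u\|_{L^\infty(B_{2\varrho}(x))}\big)\qquad(B_{2\varrho}(x)\subset\Omega).
$$
The point is that $\beta<1$ is at our disposal, so interior regularity imposes no restriction on the final exponent; this is also why the hypothesis $a_{ij}\in C(\bar\Omega)$ suffices (continuity moduli enter only through Theorem \ref{thmA2}) and why $\alpha$ ends up depending only on $\Omega$.

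\emph{Boundary estimate.} I would first remove $f$. Taking a ball $B\supset\bar\Omega$, extending $\cA$ to $B$ as in Lemma \ref{lem1.3}a and $f$ by $0$, and solving $-\tilde\cA v=f$ on $B$ with $v\in C_0(B)\cap W^{2,n}_{\loc}(B)$ by Theorem \ref{thm1.4} (as in the proof of Theorem \ref{thm2.2}), one has $v\in W^{2,n}(B)$ by the global Calder\'on--Zygmund estimate on the ball, hence $v\in C^{0,\beta}(\bar B)$ for every $\beta<1$ with $\|v\|_{C^{0,\beta}(\bar B)}\le c\|f\|_{L^n(\Omega)}$ (bounding $\|v\|_{L^\infty(B)}$ by Aleksandrov's maximum principle Theorem \ref{thmA1}). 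Then $w:=u-v_{|\bar\Omega}$ is $\cA$-harmonic on $\Omega$, continuous on $\bar\Omega$, and $w_{|\partial\Omega}=-v_{|\partial\Omega}\in C^{0,\beta}(\partial\Omega)$ with norm $\le c\|f\|_{L^n(\Omega)}$. Now fix $x_0\in\partial\Omega$; by the uniform exterior cone condition there is, uniformly in $x_0$, a truncated circular cone of fixed aperture exterior to $\Omega$ at $x_0$, on whose complement one has the $\gamma$-homogeneous barrier $r^{\gamma}g(\theta)$, with $g$ the first Dirichlet eigenfunction of the Laplace--Beltrami operator on the spherical cap and $\gamma\in(0,1)$ depending only on the aperture, $n$ and $\Lambda$; one transplants it to $\cA$ by freezing the continuous coefficients at $x_0$ (which forces the comparison radius to be small, but leaves $\gamma$ unchanged) and absorbs the lower order terms using $c\le 0$ and $|b_j|\le\|b_j\|_\infty$. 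The classical comparison argument (\cite[Chapter 9]{gil.tru}) then yields, for every $x_0\in\partial\Omega$ and $r$ small,
$$
\operatorname{osc}_{\Omega\cap B_r(x_0)}w\le c\,r^{\gamma}\big(\|w\|_{L^\infty(\Omega)}+[w]_{C^{0,\beta}(\partial\Omega)}\big)\le c\,r^{\gamma}\|f\|_{L^n(\Omega)},
$$
and, adding back $v\in C^{0,\beta}(\bar B)$, the same bound for $u$ as long as $\gamma\le\beta$.

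\emph{Patching.} Set $\alpha:=\gamma$, lowering $\beta$ so that $\gamma\le\beta<1$. For $x\in\Omega$ let $d_x=\dist(x,\partial\Omega)$ and $\bar x\in\partial\Omega$ nearest. If $r\le d_x/8$, apply the interior bound on $B_{d_x/4}(x)$ to $u-u(x)$ (whose equation differs from that of $u$ only by the bounded term $c\,u(x)$) and use the boundary bound to estimate $\|u-u(x)\|_{L^\infty(B_{d_x/4}(x))}$ by $c\,d_x^{\gamma}\|f\|_{L^n(\Omega)}$; the elementary inequalities $d_x\le\diam\Omega$ and $8r\le d_x$ then give $\operatorname{osc}_{B_r(x)}u\le c\,r^{\gamma}\|f\|_{L^n(\Omega)}$. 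If $r>d_x/8$, cover $B_r(x)$ by $B_{5r}(\bar x)$ and invoke the boundary bound at $\bar x$ (or the trivial bound $2\|u\|_{L^\infty(\Omega)}$ once $r$ exceeds the cone radius, using $\|u\|_{L^\infty(\Omega)}\le 2c_1\|f\|_{L^n(\Omega)}$). Hence $\operatorname{osc}_{B_r(x)\cap\bar\Omega}u\le c\,r^{\gamma}\|f\|_{L^n(\Omega)}$ for all $x\in\bar\Omega$ and $r>0$, so $[u]_{C^{0,\gamma}(\bar\Omega)}\le c\|f\|_{L^n(\Omega)}$; together with $\|u\|_{L^\infty(\Omega)}\le 2c_1\|f\|_{L^n(\Omega)}$ (Theorem \ref{thmA1}) and the trivial estimate $\|f\|_{L^n(\Omega)}\le\|f\|_{L^n(\Omega)}+\|u\|_{L^2(\Omega)}$ this is the assertion, with $\alpha:=\gamma$. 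The main obstacle is the boundary step: constructing a homogeneous $\cA$-supersolution on the complement of the exterior cone for the variable-coefficient non-divergence operator and, above all, obtaining the exponent $\gamma$ and all the constants \emph{uniformly} over $x_0\in\partial\Omega$ from the single geometric datum of the uniform exterior cone condition. The interior estimate is just Theorem \ref{thmA2} plus Sobolev, and the covering argument, though slightly tedious, is routine.
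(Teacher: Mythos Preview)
The paper does not actually prove this theorem; it is stated in the appendix as a quotation from \cite[Corollary 9.29]{gil.tru}, followed only by the remark that the hypothesis $u\in W^{2,n}(\Omega)$ there can be relaxed to $u\in W^{2,n}_{\loc}(\Omega)$ by inspecting the proof. So there is no in-paper argument to compare against; one can only compare your reconstruction with Gilbarg--Trudinger itself.

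Your outline (interior estimate via Theorem~\ref{thmA2} and Sobolev, boundary oscillation decay from the exterior cone, then patching) is the right shape, but two points deserve comment.

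\textbf{Circularity.} In the boundary step you invoke Theorem~\ref{thm1.4} to solve the auxiliary problem $-\tilde\cA v=f$ on a large ball. In this paper Theorem~\ref{thm1.4} is \emph{proved using} Theorem~\ref{thmA3} (the H\"older bound is precisely what gives compactness of the approximating sequence $(u_k)$ there), so as written your reconstruction is circular. The repair is easy: on a smooth ball one can appeal directly to \cite[Theorem 9.15]{gil.tru} for existence of $v\in W^{2,n}(B)\cap W^{1,n}_0(B)$, bypassing Theorem~\ref{thm1.4} entirely; alternatively, one can skip the splitting $u=w+v$ and run the boundary oscillation argument on $u$ itself, absorbing the $L^n$ right-hand side via Aleksandrov's estimate as in \cite[Corollary 9.28]{gil.tru}.

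\textbf{Method at the boundary.} The proof of \cite[Corollary 9.29]{gil.tru} does not use explicit homogeneous barriers $r^\gamma g(\theta)$; it proceeds via the Krylov--Safonov weak Harnack inequality \cite[Theorem 9.22]{gil.tru}, which yields an oscillation-decay estimate at boundary points under the exterior cone condition \cite[Corollaries 9.27--9.28]{gil.tru} and requires only measurable second-order coefficients. Your barrier construction is closer in spirit to the divergence-form argument in \cite[\S 8.10]{gil.tru}. It can be made to work here, but the freezing step is more delicate than you suggest: since the transplanted barrier is $\cA_0$-harmonic for the frozen operator $\cA_0$, there is no sign to absorb the perturbation $(a_{ij}(x)-a_{ij}(x_0))\partial_{ij}\phi\sim \omega(r)r^{\gamma-2}$; one must first lower $\gamma$ slightly so that $\cA_0\phi\le -\epsilon r^{\gamma-2}$ strictly. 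This still gives a uniform exponent depending only on $n$, $\Lambda$ and the cone aperture, so your conclusion survives, but ``leaves $\gamma$ unchanged'' overstates it.
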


In \cite[Corollary 9.29]{gil.tru} it is supposed that $u \in W^{2,n}(\Omega)$.
But an inspection of the proof and of the results preceding \cite[Corollary
9.29]{gil.tru} shows that $u\in W^{2,n}_{\loc}(\Omega)$ suffices.
The above H\"older regularity also holds for solutions of equations in
divergence form when the right-hand side $f$ is in $L^q(\Omega)$ for some
$q>\frac{n}{2}$, see \cite[Theorem 8.29]{gil.tru}.
\end{appendix}

\end{document}